\numberwithin{equation}{section}
\theoremstyle{plain}
\newtheorem{thm}{Theorem}[section]
\newtheorem{conj}[thm]{Conjecture}
\newtheorem{dfn}[thm]{Definition}
\newtheorem{lem}[thm]{Lemma}
\newtheorem{pps}[thm]{Proposition}
\theoremstyle{definition}
\newenvironment{rmk}
  {\pushQED{\qed}\renewcommand{\qedsymbol}{$\triangle$}\rmkx}
  {\popQED\endrmkx}
\newenvironment{exm}
  {\pushQED{\qed}\renewcommand{\qedsymbol}{$\triangle$}\examplex}
  {\popQED\endexamplex}
\declaretheoremstyle[
  spaceabove=-6pt,
  spacebelow=6pt,
  headfont=\normalfont\bfseries,
  postheadspace=1em,
  qed=\qedsymbol,
  headpunct={}
]{mystyle} 
\declaretheorem[name={proofof},style=mystyle,unnumbered,
]{proofof}  
\renewenvironment{proofof}[1] {\par\pushQED{\qed}\normalfont\topsep6\p@\@plus6\p@\relax\trivlist  \item[\hskip\labelsep
        \bfseries
    Proof of #1.]\ignorespaces}{\popQED\endtrivlist\@endpefalse}
\newcommand{\thickhline}{
    \noalign {\ifnum 0=`}\fi \hrule height 1pt
    \futurelet \reserved@a \@xhline
}
\newcolumntype{"}{@{\hskip\tabcolsep\vrule width 1pt\hskip\tabcolsep}}
\renewenvironment{proof}[1][\proofname] {\par\pushQED{\qed}\normalfont\topsep6\p@\@plus6\p@\relax\trivlist\item[\hskip\labelsep\bfseries#1\@addpunct{.}]\ignorespaces}{\popQED\endtrivlist\@endpefalse}
\renewcommand{\@secnumfont}{\bfseries}
\patchcmd{\section}{\scshape}{\bf}{}{}
\patchcmd{\subsection}{\scshape}{\bf}{}{}
\patchcmd{\subsubsection}{\scshape}{\bf}{}{}
\newcommand{\N}{\mathbb{N}} 
\newcommand{\Z}{\mathbb{Z}} %integers
\newcommand{\Q}{\mathbb{Q}} %rationals
\newcommand{\R}{\mathbb{R}} %reals
\newcommand{\K}{\mathbb{K}} % Koerper!
\newcommand{\F}{\mathbb{F}} %finite field
\newcommand{\OS}{\mathcal{O}_S} %ring of S-integers 1
\newcommand{\anel}[1]{#1_{\text{ring}}} %subscript 'ring', e.g. for Aut_ring(R)
\newcommand{\ri}{\mathcal{O}} %ring of integers
\DeclareMathOperator{\GL}{GL} %general linear
\DeclareMathOperator{\SL}{SL} %special linear
\newcommand{\eij}{e_{i,j}} %elementary matrix in the group
\newcommand{\di}{d_i} %elementary diagonal matrix in the group
\newcommand{\ekl}[1]{e_{#1}} %elementary matrix in the group with index-inputs
\newcommand{\mbU}{\mathbf{U}} %unipotent (or upper unitriangular in GLn)
\newcommand{\mbD}{\mathbf{D}} %diagonal (or torus in GLn)
\DeclareMathOperator{\Diag}{Diag} %diagonal matrices
\newcommand{\sbgpdk}[1]{\mathcal{D}_{#1}} %subgroup generated by the d_i(u), with index-input
\newcommand{\Mult}{\mathbb{G}_{m}} %multiplicative group scheme
\newcommand{\Addi}{\mathbb{G}_{a}} %additive group scheme
\newcommand{\Aff}{\mathbb{A}\mathrm{ff}} %the affine group
\newcommand{\AffR}[1]{\mathbb{G}_{a}(#1) \rtimes \mathbb{G}_{m}(#1)} %affine group with input
\newcommand{\mbB}{\mathbf{B}} %Borel (or upper triangular in GLn)
\newcommand{\mbG}{\mathbf{G}} %'generic' group scheme
\newcommand{\mbW}{\mathbf{W}} %Z sd D
\newcommand{\PB}{\mathbb{P}\mathbf{B}} %projective Borel
\newcommand{\PBplus}{\mathbb{P}B^{+}} %projective version of B_n^+, without the index n
\newcommand{\phee}{\varphi} %phi
\newcommand{\veps}{\varepsilon} %epsilon
\newcommand{\barra}[1]{\overline{#1}}
\newcommand{\gera}[1]{\langle {#1} \rangle}
\newcommand{\set}[1]{\{ #1 \}}
\newcommand{\mb}{\mathbf}
\newcommand{\mc}{\mathcal}
\newcommand{\mf}{\mathfrak}
\newcommand{\into}{\hookrightarrow}
\newcommand{\onto}{\twoheadrightarrow}
\newcommand{\nsgp}{\trianglelefteq} %normal subgroup
\newcommand{\Ri}{R_{\infty}} % R_infty
\definecolor{amethyst}{rgb}{0.6, 0.4, 0.8}
\DeclareMathOperator{\Aut}{Aut}
\DeclareMathOperator{\rk}{rk}
\DeclareMathOperator{\carac}{char}
\title[Twisted conjugacy in soluble arithmetic groups]{Twisted conjugacy in soluble arithmetic groups}
\author{Paula Macedo Lins de Araujo and Yuri Santos Rego}
\address{University of Lincoln, \newline 
Charlotte Scott Centre for Algebra, \newline 
Isaac Newton Building, Brayford Pool, \newline 
LN6 7TS Lincoln, United Kingdom}
\email{pmacedolinsdearaujo@lincoln.ac.uk}
\address{Otto-von-Guericke-Universit\"at Magdeburg, \newline 
Fakult\"at f\"ur Mathematik -- Institut f\"ur Algebra und Geometrie, \newline 
Postfach 4120, 39016 Magdeburg, Deutschland}
\email{ysantosrego@lincoln.ac.uk}
\curraddr{Charlotte Scott Research Centre for Algebra, University of Lincoln, Isaac Newton Building, Brayford Pool, LN6 7TS Lincoln, United Kingdom}
\subjclass[2020]{20F16, 55M20, 11F23; 20E36, 20H25, 22E40}
\keywords{Twisted conjugacy, solvmanifolds, property $R_\infty$, fixed points, Nielsen number, $S$-arithmetic groups.}
\begin{document}
\begin{abstract}
Reidemeister numbers of group automorphisms encode the number of twisted conjugacy classes of groups and might yield information about self-maps of spaces related to the given objects. Here we address a question posed by Gon\c{c}alves and Wong in the mid 2000s: we construct an infinite series of compact connected solvmanifolds (that are \textit{not} \textit{nil}manifolds) of strictly increasing dimensions and all of whose self-homotopy equivalences have vanishing Nielsen number. To this end, we establish a sufficient condition for a prominent (infinite) family of soluble linear groups to have the so-called property~$R_\infty$. In particular, we generalize or complement earlier results due to Dekimpe, Gon\c{c}alves, Kochloukova, Nasybullov, Taback, Tertooy, Van den Bussche, and Wong, showing that many soluble $S$-arithmetic groups have~$R_\infty$ and suggesting a conjecture in this direction.  
\end{abstract}
\maketitle \vspace{-1.0cm}
\thispagestyle{empty}

\section{Introduction}
Since at least the 1930s, twisted conjugation has had its presence in Lie theory~\cite{SpringerSurvey} and in parallel in fixed point theory after Lefschetz, Reidemeister, and Nielsen~\cite{BobBrownBook,GeogheganFix,JiangPrimer}. 
Let $G$ be a group with a given automorphism $\phee\in\Aut(G)$. The orbits of the action $G \times G \to G$, $(h,g) \mapsto h g \phee(h)^{-1}$ are called $\phee$\emph{-(twisted) conjugacy classes} or \emph{Reidemeister classes} of $\phee$.
The total number of $\phee$-conjugacy classes is the \emph{Reidemeister number} of $\phee$, denoted by $R(\phee)$. 
In the terminology introduced by Taback and Wong~\cite{TabackWong0}, the group $G$ has \emph{property}~$\Ri$ if $R(\phee) = \infty$ for all $\phee \in \Aut(G)$.

After influential works across different areas (e.g., \cite{Zappa0, SteinbergEndo, FelshtynHill, LevittLustig}), many authors set out to establish which groups have property~$\Ri$. Whether $G$ has~$\Ri$ or admits a specific $\phee\in\Aut(G)$ with $R(\phee)<\infty$ may shed some light on the structure of $G$ or on properties of spaces related to it. Let us mention some recent examples. If $G$ has~$\Ri$ and a group $\Gamma$ algebraically fibers with kernel $G$, then there is no compact model for the classifying space $\underline{\underline{B}}\Gamma$ of $\Gamma$ with respect to the family of virtually cyclic subgroups \cite{TimmXiaolei0}. In another direction, if $G$ is polycylic and admits $\phee \in \Aut(G)$ with $R(\phee^n) < \infty$ for all $n \in \Z_{\geq 1}$, then $G$ must be virtually nilpotent \cite{JonasStronglySI}. Yet in a different vein, if there is a nonzero global fixed point for the canonical action of $\Aut(G)$ on the first cohomology $H^1(G,\R)$, then $G$ has~$\Ri$ \cite{DesiDaciberg,Brasuca0}.  

Turning to the topological origins of twisted conjugacy, we recall two central problems in fixed point theory. Firstly, one is interested in computing --- or at least estimating --- the Nielsen number $N(f)$ of a continuous self-map $f : M \to M$ of a (compact, connected) space $M$. The Nielsen number $N(f)$ is a homotopy invariant that gives the number of essential fixed point classes of $f$, though the challenge is that $N(f)$ can be notoriously difficult to determine. Secondly, one is interested in constructing spaces whose self-maps have prescribed numbers of fixed points. Now, in case $G \cong \pi_1(M)$, the fundamental group of a (compact, connected) manifold $M$, the Reidemeister numbers of automorphisms of $G$ are related to fixed-point properties of continuous self-maps of $M$; we refer the reader to the collection~\cite{HandbookFPT} for a broad overview on fixed point theory. 

In the above setting, Gon\c{c}alves and Wong gave in the influential paper~\cite{DacibergWongCrelle} the first examples of nilmanifolds (in every dimension $n \geq 4$) all of whose self-homeomorphisms have Nielsen number zero (and thus are homotopic to fixed-point-free maps). Given their methods and known results from the literature, they ponder what can be said in the general case of solvmanifolds \cite[p.~24]{DacibergWongCrelle}. Since then, few sporadic examples of solvmanifolds (that are not nilmanifolds) whose self-maps have vanishing Nielsen number were constructed, and only in dimensions up to $4$ \cite{DacibergWongCounterexample, FelshtynLeeInfraSolv, JoLeeInfraSolv, KarelSamIrisSolv, KarelIrisInfrasolv}. In the present work, we solve the problem of Gon\c{c}alves--Wong to some extent. More precisely, we construct solvmanifolds of arbitrarily high dimensions --- that are not nilmanifolds --- and all of whose self-homotopy equivalences have vanishing Nielsen number. 

\begin{thm} \label{thm:geometricapplication}
There exists a family of closed connected solvmanifolds 
\[ M_1 \subset M_2 \subset M_3 \subset \cdots\] 
of dimensions $\dim(M_n) = n^2+2n$ and with the following properties.
\begin{enumerate}
    \item The Nielsen number of any self-homotopy equivalence of $M_n$ is zero. (Hence any such map on $M_n$ is homotopic to a fixed-point-free map.) In case $n \geq 2$ and $h : M_n \to M_n$ is a homeomorphism, then $h$ is isotopic to a fixed-point-free map. 
    \item The fundamental group $\pi_1(M_n)$ is of exponential growth and of derived length $\lceil \log_2(n+1) \rceil+1$. (In particular, none of the manifolds $M_n$ are nilmanifolds, and $\pi_1(M_n)$ is not metabelian if $n \geq 2$.)
\end{enumerate}
\end{thm}

The manifolds from \cref{thm:geometricapplication} are quite explicit (under a suitable interpretation). The first space $M_1$ is illustrated in Figure~\ref{fig:oneandonly}. 

\newsavebox{\NiceTorus}
\savebox{\NiceTorus}{
\begin{tikzpicture}%[rotate=180]
%Torus
\filldraw[fill=gray!50] (0,0) ellipse (1.6 and .9);
%Hole
\begin{scope}[scale=.8]
\filldraw[rounded corners=24pt,fill=white] (-.925,0)--(0,-.5)--(.925,0) (-.9,0)--(0,.6)--(.9,0);
\end{scope}
%Cut 1
\draw[densely dashed] (0,-.9) arc (270:90:.2 and .365);
\draw (0,-.9) arc (-90:90:.2 and .365);
%Cut 2
%\draw (0,.9) arc (90:270:.2 and .348);
%\draw[densely dashed] (0,.9) arc (90:-90:.2 and .348);
\end{tikzpicture}
 }

\newsavebox{\EsseUm}
\savebox{\EsseUm}{
\begin{tikzpicture}
\node (1) at (0,0) {$\mathbb{S}^1$};
\draw [very thick, black] (0,0) circle (1.0);
\end{tikzpicture}
}

\begin{figure}[htb!] \label{fig:oneandonly}
\begin{tikzpicture}[every node/.append style={transform shape},scale=1]
\node (T1) {\usebox{\NiceTorus}};
\node (M) [right=of T1] {$M_1$};
\node (C) [right=of M] {\usebox{\EsseUm}};
\draw[arrows = {Hooks[right]->}] (T1) to (M);
\draw[arrows = {->>}] (M) to (C);
 \end{tikzpicture}
  \caption{$M_1$ as a fiber bundle over the circle: $M_1$ is the $3$-manifold given by the mapping torus of the self-homeomorphism of $(\R/\Z)^2$ induced by the Anosov map $\left(\begin{smallmatrix} 0 & 1 \\ 1 & 1 \end{smallmatrix} \right)$.}
 \end{figure}

The main new ingredient in the proof of \cref{thm:geometricapplication} is our main technical result, \cref{thm:Aff} below, which gives a contribution to the theory of twisted conjugacy of linear groups. To contextualize, a problem of particular interest has been that of determining which finitely generated linear groups have property~$\Ri$, especially $S$-arithmetic groups. Among such groups whose twisted conjugacy classes were studied, those that are full-sized (i.e., contain nonabelian free subgroups) turn out to have property~$\Ri$; see, for example, \cite{LevittLustig, FelshtynHyperbolic, MubeenaSankaranLattices, MubeenaSankaranQI, Timur0, MitraSankaranGL2, MitraSankaran}. Conjecturally, any finitely generated full-sized linear group should have~$\Ri$ \cite[Conjecture~R]{FelshtynTroitskyAspects}. 

In contrast, there are multiple examples of (virtually) soluble $S$-arithmetic groups that do, and many that do not, have property~$\Ri$; see \cite{DacibergWongCounterexample, DacibergWongWreath, TabackWong0, DacibergWongCrelle, FelshtynDacibergMetabelian, KarelPenni, KarelKaiserSam, KarelDacibergNil, KarelSamIrisSolv, TimurUniTri}. A particular issue in this setting is that there are few recognizable patterns regarding `what causes' such groups to exhibit~$\Ri$. 

Here we provide a tool to check for property~$\Ri$ in a wide class of soluble matrix groups, showcasing how the base ring of coefficients plays a role in our case. These groups arise from (variations of) the $\Z$-subschemes $\mbB_n \leq \GL_n$ of (upper) triangular matrices and $\Aff \leq \GL_2$ of one-dimensional affine transformations. Given a (commutative, unital) ring $R$, we work concretely with the groups of $R$-points 
\[ 
\mbB_n(R) = \left( \begin{smallmatrix} * & * & \cdots & * \\  & * & & \\ & & \ddots & \vdots \\ & & & * \\  \end{smallmatrix} \right) \leq \GL_n(R) 
\quad \text{ and } \quad 
\Aff(R) = \begin{pmatrix} * & * \\ 0 & 1 \end{pmatrix} \leq \GL_2(R),
\]
and also their variants $B_n^+(R)$ and $\Aff^+(R)$ without torsion on the diagonal and their projective quotients $\PB_n(R)$ and $\PBplus_n(R)$. The reader is referred to \cref{sec:osgrupos} for more on $\mbB_n(R)$, $\Aff(R)$, and their variants $\PB_n(R)$, $B_n^+(R)$, $\Aff^+(R)$, $\PBplus_n(R)$. We prove the following.

\begin{thm} \label{thm:Aff}
Let $R$ be an integral domain and let $A \in \{\Aff,\Aff^+\}$. Every $\psi \in \Aut(A(R))$ induces an automorphism $\psi_{\mathrm{tf}}$ on the torsion-free part of the diagonal subgroup $\left(  \begin{smallmatrix} * & 0 \\ 0 & 1 \end{smallmatrix} \right) \cong A(R) / \left(  \begin{smallmatrix} 1 & * \\ 0 & 1 \end{smallmatrix} \right)$ such that $R(\psi) \geq R(\psi_{\mathrm{tf}})$. Moreover, if $R^\times$ is finitely generated and every such $\psi_{\mathrm{tf}}$ has infinitely many fixed points, then $G(R)$ has property~$\Ri$ for \emph{all} $n \geq 2$ and \emph{all} $G \in \{ \mbB_n, \PB_n, \Aff \}$ (in case $A = \Aff$), respectively \emph{all} $G \in \{ B_n^+, \PBplus_n, \Aff^+ \}$ (in case $A = \Aff^+$).
\end{thm}

While the groups to which \cref{thm:Aff} applies might seem a bit restrictive at first glance, we recall that they are related to, generalize, or provide analoga of important families of $S$-arithmetic metabelian groups appearing in the literature. Let us illustrate this by providing some examples.

We start with the work of Taback--Wong~\cite{TabackWong0} which strongly influenced ours. They consider `generalized soluble Baumslag--Solitar' groups $G_n$ as follows. Let $p_1, \ldots, p_k$ be the the prime factors of~$n \in \N$. Then $G_n$ is defined by the presentation 
 \[G_n \cong \langle a,t_1, \dots, t_k \mid t_it_j=t_jt_i, \, t_{i}at_{i}^{-1}=a^{p_i} \rangle.\]
Their groups $G_n$ coincide with $\Aff^+(\Z[1/n])$. 
For a polycyclic example, some of the groups $\Z^2 \rtimes_\theta \Z$ investigated by Gon\c{c}alves--Wong in~\cite{DacibergWongCounterexample} are of the form $\Aff^+(\ri)$ for appropriate rings of integers $\ri$ of quadratic number fields. And, for an example in positive characteristic, the groups $\Aff^+(\F_p[t,t^{-1},(t-1)^{-1}])$ are quasi-isometric to certain Diestel--Leader graphs~\cite[Section~3]{BartholdiNeuhauserWoessDL}; see also \cite{DesiSaidSSFPn}. In particular, applying \cref{thm:Aff} to such groups with $p > 2$, we obtain a new series of finitely presented finite-state self-similar groups with property~$\Ri$ but not of homological type $\mathtt{FP}_\infty$; see \cref{pps:zft}.

Although we are mostly interested in $S$-arithmetic groups, no familiarity with these objects is assumed. In a companion paper~\cite{Bn2}, which was part of a previous version of the present work, we further discuss the problem of classifying which soluble $S$-arithmetic groups have~$\Ri$. There, we give an alternative criterion to check for~$\Ri$ in $\mbB_n(R)$ and $\PB_n(R)$, also providing explicit examples of non-$S$-arithmetic groups to which that tool applies. In \cref{thm:Aff} here, the problem of checking for~$\Ri$ for $\mbB_n(R)$ and its variants is reduced to looking at automorphisms of $\Aff(R) \cong R\rtimes R^\times$, while the criterion from~\cite{Bn2} reduces the problem to looking at ring automorphisms of~$R$. This is a further complexity reduction since $\anel{\Aut}(R)$ embeds naturally into $\Aut(G(R))$ for $G \in \{\mbB_n, \PB_n\}$. The drawback is that the tool from~\cite{Bn2} works for solubility class $n \geq 4$ and in general does not apply to the variants without torsion on the diagonal.

The present article is divided into two parts, mostly independent. The first part starts in \cref{sec:geometricIntro} with some more background and related work from fixed point theory. Assuming our technical \cref{thm:Aff} to hold and postponing its proof, we illustrate in \cref{illustrating} how to apply it concretely over a certain ring of integers. Using this result, we give in \cref{ProofApplication} a proof of our main application, \cref{thm:geometricapplication}. \cref{sec:geometricIntro} is written as self-contained as possible, so that the reader interested in the topological aspects of the paper can skip directly to it. However, readers interested in the algebraic aspects of this work might benefit from reading Sections~\cref{illustrating} and~\ref{ProofApplication} since they serve as a warm-up illustrating many of the objects and arguments that appear later throughout the paper. 

The second part comprises of Sections~\ref{sec:LemmataRinfty}, \ref{sec:provadothmA} and~\ref{Applications}, with many results generalizing what was seen in \cref{sec:geometricIntro}. In \cref{sec:LemmataRinfty} we collect known facts about Reidemeister numbers needed for our work. We fix notation and recall standard properties of our groups of interest in \cref{sec:osgrupos}. In \cref{sec:IsUncharacteristic}, we address the question of whether the group $\mbU_n(R)$ of unitriangular matrices is characteristic in the groups we consider; cf. Propositions~\ref{pps:UnNOTcharacteristic} and~\ref{pps:char}. \cref{thm:Aff} is more precisely restated and proved in \cref{sec:provadothmA}.  Following that, we provide more applications of \cref{thm:Aff} to families of $S$-arithmetic groups in \cref{Applications}, widely generalizing the arithmetic example used to construct our solvmanifolds; cf. \cref{pps:zft}. In that section we also discuss the current state of knowledge and pose a conjecture about~$\Ri$ for soluble $S$-arithmetic groups; see \cref{theconjecture}.

\section{Solvmanifolds and fixed point theory} \label{sec:geometricIntro}

 The Nielsen number $N(f)$ of a self-map $f : M \to M$ of a (compact, connected) manifold $M$ is a powerful homotopy invariant that provides the minimal number of fixed points for maps homotopic to $f$; see~\cite{JiangPrimer,Wecken}. As is known~\cite{HandbookFPT}, computing Nielsen numbers can be very difficult. In the 1970s, Brooks, Brown, Pak and Taylor proved that $n$-dimensional tori are the only (compact, connected) Lie groups whose self-maps have Nielsen number equal to the absolute value of their (easily computable) Lefschetz number~\cite{BBPT}. 

Similar relations using Lefschetz or Reidemeister numbers were later shown to hold for a few other types of manifolds~\cite{HandbookFPT}. Most prominently, a theorem of Anosov~\cite{AnosovNil} proved in the 1980s implies an alternative of the Brooks--Brown--Pak--Taylor result using Reidemeister numbers. That is, one has the following `Anosov-type' theorem: if $M$ is a compact connected \emph{nil}manifold, $f : M \to M$ a self-homotopy equivalence and $\phee \in \Aut(\pi_1(M))$ the automorphism induced by $f$ on the fundamental group $\pi_1(M)$, then $N(f) = 0$ in case $R(\phee) = \infty$, otherwise $N(f) = R(\phee) < \infty$; cf.~\cite{WongCrelle,DacibergWongCrelle} and references therein. Around two decades later, Gon\c{c}alves and Wong explored this relationship and property~$\Ri$ and managed to construct, in each dimension $n \geq 4$, examples of $n$-dimensional nilmanifolds whose self-homeomorphisms can be homotoped to become free of fixed points~\cite[Theorem~6.1]{DacibergWongCrelle}. 

All of the above spaces belong more generally to the class of \emph{solv}manifolds, i.e., homogeneous spaces of soluble Lie groups modulo closed subgroups. In that fundamental paper~\cite{DacibergWongCrelle}, Gon\c{c}alves and Wong broadly pose the question of what happens with infra-nilmanifolds and with solvmanifolds in general~\cite[p.~24]{DacibergWongCrelle}. Since then, there has been substantial progress towards determining which (virtually) {nilpotent} groups have property~$\Ri$ \cite{Romankov,KarelPenni,KarelDacibergNil,TimurUniTri}, which then led to further constructions of (infra-)nilmanifolds --- again in arbitrarily high dimensions --- with plenty of fixed-point-free self-maps; see, for example, \cite[Corollary~3.6]{KarelDacibergSurface}. 

In contrast, the question of Gon\c{c}alves and Wong remained elusive for general solvmanifolds not covered by the previous cases. The problem posed by them boils down to the following: do there exist solvmanifolds (that are \emph{not} nilmanifolds) in arbitrarily high dimensions all of whose self-homotpy equivalences can be made fixed-point-free? So far, this has been investigated in isolated cases in dimensions up to~$4$; cf.~\cite{DacibergWongCounterexample, FelshtynLeeInfraSolv, JoLeeInfraSolv, KarelSamIrisSolv, KarelIrisInfrasolv}. 

Our \cref{thm:geometricapplication} thus answers the above question in the affirmative. Unsurprisingly, the starting dimension $3$ in \cref{thm:geometricapplication} is optimal. Indeed, this is the lowest dimension for which Wecken's theorem applies~\cite{Wecken}, and moreover the $2$-dimensional case --- i.e., the Klein bottle $K$ --- has to be trivially excluded because $K$ {does} have self-homotopy equivalences with positive Nielsen number (see \cite[Example~8.8]{KarelIrisInfrasolv}), even though $\pi_1(K)$ has property~$\Ri$ \cite[Theorem~2.2]{DacibergWongCrelle}. 

The proof of \cref{thm:geometricapplication} has two main ingredients. The first is a version of the `Anosov-type' theorem --- originally due to Keppelmann and McCord~\cite{KeppelmannMcCordSolv} and improved by Dekimpe and Van den Bussche~\cite{KarelIrisInfrasolv}; cf. \cref{thm:KMcDVdB} below. The second ingredient is the construction of (torsion-free, non-virtually nilpotent) polycyclic groups of large solubility class (and increasing Hirsch lengths) and with property~$\Ri$, which is a consequence of our technical \cref{thm:Aff} applied to a specific arithmetic group. The construction of the desired manifolds is done via (generalizations of) the classical Mal'cev completion; see~\cite{MostowSolv,AuslanderSolvI,Baues,JonasNIL}. We remark that Gon\c{c}alves and Kochloukova established a cohomological criterion to check for property~$\Ri$ in a wide class of nilpotent-by-abelian groups; cf. \cite[Theorem~4.3]{DesiDaciberg}. However, their results do not apply to polycyclic groups, hence cannot imply the existence of solvmanifolds as those of our \cref{thm:geometricapplication}. 

Despite our contribution towards the question of Gon\c{c}alves--Wong, we do not know whether one can construct {in every dimension} $n \geq 3$ a solvmanifold with the properties prescribed in \cref{thm:geometricapplication}. Another potential source of examples of higher dimensional solvmanifolds with vanishing Nielsen numbers was indicated by Gon\c{c}alves and Wong themselves in an earlier paper~\cite{DacibergWongCounterexample}. They gave conditions for extensions of the form $\Z^2 \rtimes_A \Z$ to have~$\Ri$ depending on the matrix $A \in \SL_2(\Z)$ --- the natural next step would be to generalize their methods to $\Z^n \rtimes_{A_n} \Z$ with $A_n \in \GL_n(\Z)$. However, the polyclic group $\Z^n \rtimes_{A_n} \Z$ might well be nilpotent (so that the corresponding solvmanifold is in fact a nilmanifold), and $\Z^n \rtimes_{A_n} \Z$ is \emph{metabelian}, i.e., soluble of class $2$ --- thus groups in this family do not have increasing solubility class. As exemplified in work of Dekimpe--Tertooy--Van den Bussche \cite{KarelSamIrisSolv}, moving from $\Z^2 \rtimes_{A_2} \Z$ up to the four-dimensional case $\Z^3 \rtimes_{A_3} \Z$ is a nontrivial matter, so that heuristically it might be difficult to classify which extensions $\Z^n \rtimes_{A_n} \Z$ have property~$\Ri$.

\subsection{A first step towards Theorem~\ref{thm:geometricapplication}: illustrating how Theorem~\ref{thm:Aff} works}\label{illustrating}

One necessary step for the proof of \cref{thm:geometricapplication} is showing that a certain infinite family of matrix groups have~$\Ri$. We therefore take the opportunity to illustrate how \cref{thm:Aff} works in practice. We keep this section as self-contained and independent from the rest of this work as possible, simply assuming that \cref{thm:Aff} holds and using basic facts from algebraic number theory~\cite{Neukirch}. 

Let $\phi$ denote the (large) golden ratio, that is, $\phi = \frac{1+\sqrt{5}}{2}$ is the positive root of the polynomial $x^2-x-1 \in \Z[x]$. The ring of integers $\Z[\phi] \subset \Q(\phi) = \Q(\sqrt{5})$ admits $\{1,\phi\}$ as an integral basis~\cite[p.~16]{Neukirch}, i.e., 
\[\Z[\phi] = \Z \oplus \Z\phi = \{ a + b\phi \mid a,b \in \Z \} \cong \Z^2.\] 
The (small, or inverse) golden ratio $\tau = \frac{-1+\sqrt{5}}{2} = \phi - 1$ is the multiplicative inverse of $\phi$ in $\Z[\phi]$. Note that $\phi$ is a torsion-free unit and, by Dirichlet's unit theorem~\cite[{\textsection}~7]{Neukirch}, the group of units of $\Z[\phi]$ is given by 
\[\Z[\phi]^\times = \{ \pm \phi^n \mid n \in \Z \} = \gera{-1,\phi} \cong C_2 \times \Z.\]
In particular, $\{\phi^n \mid n \in \Z\} = \gera{\phi} \cong \Z$ a set of torsion-free units of $\Z[\phi]$.

The corresponding group of `positive' affine transformations $\Aff^+(\Z[\phi])$ is given as
\[
\Aff^+(\Z[\phi]) = \left\{\begin{pmatrix} \diamond & * \\ 0 & 1 \end{pmatrix} \, \middle| \, \diamond \in \gera{\phi}, \ast \in \Z[\phi] \right\} \leq \GL_2(\Z[\phi]),
\]
that is, $\Aff^+(\Z[\phi])$ is the subgroup of $\Aff(\Z[\phi])$ of elements whose diagonal entries belong the above mentioned set of torsion-free units.

Let $B_n^{+}(\Z[\phi]) \leq \GL_n(\Z[\phi])$ denote the subgroup of upper triangular matrices with only positive entries on the main diagonal. (Thus $a_{i,i} = \phi^{m_i} \in \gera{\phi} \subset \Z[\phi]^\times$ for any matrix $A = (a_{i,j}) \in B_n^+(\Z[\phi])$.) Consider the group $\PBplus_n(\Z[\phi])$, which is the quotient of $B_n^{+}(\Z[\phi])$ by its center 
\[Z_n(\gera{\phi}) := \{ \mathrm{Diag}(\phi^{\veps}, \ldots, \phi^{\veps}) \mid \veps \in \Z \},\] 
which is composed of all scalar matrices with entries in $\gera{\phi} \subset \Z[\phi]^\times$. 

\begin{pps}\label{pps:goldenRi} 
The groups $\PBplus_n(\Z[\phi])$ and $B_n^+(\Z[\phi])$ have property~$\Ri$ for all $n\geq 2$.
\end{pps}

\begin{proof} 
First, a remark: direct matrix computations show that the unitriangular subgroup $\left(\begin{smallmatrix} 1 & * \\ 0 & 1 \end{smallmatrix} \right) \leq \Aff^+(\Z[\phi])$ is normal and in fact characteristic since it is the maximal abelian normal subgroup of $\Aff^+(\Z[\phi])$. The quotient $\Aff^+(\Z[\phi]) / \left(\begin{smallmatrix} 1 & * \\ 0 & 1 \end{smallmatrix} \right)$ is easily seen to be (isomorphic to) the diagonal subgroup $\left\{ \left(\begin{smallmatrix} \phi^\ell & 0 \\ 0 & 1 \end{smallmatrix} \right) \mid \ell \in \Z\right\}$ of $\Aff^+(\Z[\phi])$. We write $\left(\begin{smallmatrix} a & b \\ 0 & 1 \end{smallmatrix} \right) \cdot \left(\begin{smallmatrix} 1 & * \\ 0 & 1 \end{smallmatrix} \right)$ for (left) cosets of $\Aff^+(\Z[\phi])$ modulo the unitriangular subgroup $\left(\begin{smallmatrix} 1 & * \\ 0 & 1 \end{smallmatrix} \right)$. The proof now will be a straightforward application of \cref{thm:Aff}, which works as follows:  
\begin{enumerate}
\item Consider an arbitrary automorphism $\psi \in \Aut(\Aff^{+}(\Z[\phi]))$. This map induces an automorphism $\psi_{\mathrm{tf}}$ on the quotient $\Aff^+(\Z[\phi]) / \left(\begin{smallmatrix} 1 & * \\ 0 & 1 \end{smallmatrix} \right)$. In symbols, $\psi_{\mathrm{tf}}$ is the automorphism defined by 
\[\psi_{\mathrm{tf}}\left( \left(\begin{smallmatrix} \phi^\ell & 0 \\ 0 & 1 \end{smallmatrix} \right) \cdot \left(\begin{smallmatrix} 1 & * \\ 0 & 1 \end{smallmatrix} \right) \right) = \psi\left( \left(\begin{smallmatrix} \phi^\ell & 0 \\ 0 & 1 \end{smallmatrix} \right) \right) \cdot \left(\begin{smallmatrix} 1 & * \\ 0 & 1 \end{smallmatrix} \right).  \]
	\item If $\psi_{\mathrm{tf}}$ fixes infinitely many points, then \cref{thm:Aff} says that all groups $\PBplus_n(\Z[\phi])$ and $B_n^+(\Z[\phi])$ with $n\geq 2$ have~$\Ri$. 
\end{enumerate} 
By definition, there exist $m \in \Z \setminus \{0 \}$ and $x_{\phi} \in \Z[\phi]$ such that 
\[\psi \left(\left( \begin{smallmatrix} \phi & 0 \\  0 & 1 \\  \end{smallmatrix} \right)\right) = \left( \begin{smallmatrix} \phi^m & x_\phi \\  0& 1 \\  \end{smallmatrix} \right) = \left( \begin{smallmatrix} \phi^m & 0 \\  0& 1 \\  \end{smallmatrix} \right) \left( \begin{smallmatrix} 1 \phantom{||} & \phi^{-m} x_\phi \\  0 \phantom{||} & 1 \\  \end{smallmatrix} \right).\] 
In particular, 
\[\psi_{\mathrm{tf}} \left(\left( \begin{smallmatrix} \phi & 0 \\  0 & 1 \\  \end{smallmatrix} \right)\cdot \left(\begin{smallmatrix} 1 & * \\ 0 & 1 \end{smallmatrix} \right) \right)= \left( \begin{smallmatrix} \phi^m & 0 \\  0& 1 \\  \end{smallmatrix} \right) \cdot \left(\begin{smallmatrix} 1 & * \\ 0 & 1 \end{smallmatrix} \right).\] 
Thus, if we can show that $m = 1$, it will follow that the element $\left( \begin{smallmatrix} \phi & 0 \\  0 & 1 \\  \end{smallmatrix} \right)\cdot \left(\begin{smallmatrix} 1 & * \\ 0 & 1 \end{smallmatrix} \right)$ is fixed by $\psi_{\mathrm{tf}}$, hence also all of its powers. As $\phi$ has infinite order, this gives infinitely many points fixed by $\psi_{\mathrm{tf}}$, which will finish off the proof.

We argue using straightforward matrix computations and the algebraic relation $\phi^2 = \phi + 1$ that $m = 1$. Since $\left(\begin{smallmatrix} 1 & * \\ 0 & 1 \end{smallmatrix} \right)$ is a characteristic subgroup of $\Aff^+(\Z[\phi])$, there exists an $r \in \Z[\phi]$ such that 
\[\psi \left(\left( \begin{smallmatrix} 1 & 1 \\  0 & 1 \\  \end{smallmatrix} \right)\right)=\left( \begin{smallmatrix} 1 & r \\  0& 1 \\  \end{smallmatrix} \right).\] 
On the one hand, we have 
\begin{align*} 
\psi\left(\left( \begin{smallmatrix} 1 & \phi^2 \\  0& 1 \\  \end{smallmatrix} \right)\right) 
& = \psi\left(\left( \begin{smallmatrix} \phi & 0 \\  0& 1 \\  \end{smallmatrix} \right)^2 \, \left( \begin{smallmatrix} 1 & 1 \\  0& 1 \\  \end{smallmatrix} \right) \, \left( \begin{smallmatrix} \phi & 0 \\  0& 1 \\  \end{smallmatrix} \right)^{-2}\right) \\	
& = \left( \begin{smallmatrix} \phi^m & x_\phi \\  0& 1 \\  \end{smallmatrix} \right)^2 \, 
\left( \begin{smallmatrix} 1 & r \\  0& 1 \\  \end{smallmatrix} \right) \, \left( \begin{smallmatrix} \phi^m & x_\phi \\  0& 1 \\  \end{smallmatrix} \right)^{-2} = \left(\begin{smallmatrix} 1 & \phi^{2m}r \\  0& 1 \\  \end{smallmatrix} \right).
\end{align*}
One the other hand, as $\phi^2=\phi+1$, we have 
\begin{align*} 
\psi\left(\left(\begin{smallmatrix} 1 &\phi^{2} \\  0& 1 \\  \end{smallmatrix} \right)\right) 
& = \psi\left(\left(\begin{smallmatrix} 1 & \phi \\  0& 1 \\  \end{smallmatrix} \right) \left(\begin{smallmatrix} 1 & 1 \\  0& 1 \\  \end{smallmatrix} \right)\right) \\	
& = \psi\left(\left(\begin{smallmatrix} \phi & 0\\  0& 1 \\  \end{smallmatrix} \right) \left(\begin{smallmatrix} 1 & 1 \\  0& 1 \\  \end{smallmatrix}\right) \left(\begin{smallmatrix} \phi & 0 \\  0& 1 \\  \end{smallmatrix} \right)^{-1}\right) 
\psi\left(\left(\begin{smallmatrix} 1 & 1 \\  0& 1 \\  \end{smallmatrix} \right)\right)\\
& = \left(\begin{smallmatrix} \phi^m & x_\phi \\  0& 1 \\  \end{smallmatrix} \right) \left(\begin{smallmatrix} 1 & r \\  0& 1 \\  \end{smallmatrix}\right) \left(\begin{smallmatrix} \phi^m & x_\phi \\  0& 1 \\  \end{smallmatrix} \right)^{-1} 
\left(\begin{smallmatrix} 1 & r \\  0& 1 \\  \end{smallmatrix} \right)  = \left(\begin{smallmatrix} 1 & \phi^m r + r \\  0& 1 \\  \end{smallmatrix} \right).
\end{align*}
Comparing both values for $\psi\left(\left(\begin{smallmatrix} 1 &\phi^{2} \\  0& 1 \\  \end{smallmatrix} \right)\right) $, we conclude that $\phi^{2m}r=\phi^{m}r+r$. Since $\psi$ is an automorphism, $r$ must be nonzero. As $\Z[\phi]$ is an integral domain, it follows that $\phi^{2m}=\phi^m+1$. Thus $\phi^m$ is also a positive real root of $x^2-x-1 \in \Z[x]$. Since the two roots of this polynomial are $\phi > 0$ and $\nu = \frac{1-\sqrt{5}}{2} < 0$, the above is only possible if $m = 1$, concluding the proof. 
\end{proof}

\subsection{Proof of Theorem~\ref{thm:geometricapplication}}\label{ProofApplication}
Throughout this section, for $n \geq 1$ let $\Gamma_n$ denote the group $\PBplus_{n+1}(\Z[\phi])$ considered in \cref{illustrating}. We construct the manifolds $M_n$ using the groups $\Gamma_n$ and classic results from differential geometry~\cite{MostowSolv, WangSolv, AuslanderSolvI, Wolf, Baues, JonasNIL}. 

Recall that $B_{n+1}^+(\Z[\phi])$ is the subgroup of upper triangular matrices in $\GL_{n+1}(\Z[\phi])$ whose diagonal entries belong to the set of positive units $\Z\cong \gera{\phi} \subset \Z[\phi]^\times$. The group $\Gamma_{n+1} = \PBplus_{n+1}(\Z[\phi])$ is the quotient of $B_{n+1}^+(\Z[\phi])$ modulo scalar matrices in $B_{n+1}^+(\Z[\phi])$. In particular, the subgroup $\mbU_{n+1}(\Z[\phi]) \leq \GL_{n+1}(\Z[\phi])$ of upper \emph{uni}triangular matrices (i.e., upper triangular matrices with $1$'s on the main diagonal) naturally embeds as a normal subgroup in $\Gamma_n$. Moreover, the corresponding quotient is isomorphic to the diagonal subgroup of $\PBplus_{n+1}(\Z[\phi])$, which is seen to consist of $n+1$ (commuting) copies of $\gera{\phi} \cong \Z$ modulo scalar matrices with entries in $\gera{\phi} \cong \Z$. 

The above considerations imply that $\Gamma_n$ fits into a short exact sequence 
\begin{equation} \label{eq:sesGamman}
1 \into \mbU_{n+1}(\Z[\phi]) \into \Gamma_n \onto \Z^n \onto 1.
\end{equation}
As is known (or can be checked by matrix computations), the unitriangular group $\mbU_{n+1}(\Z[\phi])$ is nilpotent of class $n$. Since the base ring $\Z[\phi]$ is additively isomorphic to $\Z^2$, the normal subgroup $\mbU_{n+1}(\Z[\phi]) \nsgp \Gamma_n$ is a finitely generated torsion-free nilpotent subgroup. In particular, $\Gamma_n$ is a torsion-free polycyclic group. Invoking a generalization of the Mal'cev completion for soluble groups --- e.g., following Baues~\cite{Baues} or Der\'e~\cite{JonasNIL} --- we obtain a (closed, connected) solvmanifold $M_n$ such that $\pi_1(M_n) \cong \Gamma_n$. Since the above Mal'cev-type construction is functorial and each $\Gamma_n$ naturally embeds into $\Gamma_{n+1}$ by mapping $\PBplus_{n+1}(\Z[\phi])$ to the upper-left corner of $\PBplus_{n+2}(\Z[\phi])$, each $M_n$ so obtained embeds into the succeeding manifold $M_{n+1}$. Moreover, the dimension of $M_n$ equals the Hirsch length of its fundamental group $\pi_1(M_n) \cong \Gamma_n$, which in turn can be read off from the exact sequence~\eqref{eq:sesGamman}. Precisely, the (normal, polycyclic) subgroup $\mbU_{n+1}(\Z[\phi])$ has Hirsch length $2\binom{n}{2}$ and the (polycyclic) quotient $\Z^n$ has Hirsch length $n$, whence 
\[\dim(M_n) = 2\cdot\frac{n(n+1)}{2} + n = n^2 + 2n.\] 
The fact that $\pi_1(M_n) \cong \Gamma_n = \PBplus_{n+1}(\Z[\phi])$ is of exponential growth is a consequence of Wolf's theorem~\cite[Theorem~4.3]{Wolf}. 
The class of solubility of $\Gamma_n$ can be deduced using familiar computations. We sketch this below.

\begin{lem} \label{lem:derivedlengthGamman}
The soluble group $\Gamma_n$ has derived subgroup $\Gamma_n' = \mbU_{n+1}(\Z[\phi])$, and the derived length (i.e., solubility class) of $\Gamma_n$ is exactly $\lceil \log_2(n+1) \rceil +1$.
\end{lem}

\begin{proof}
Fix $n\geq 1$. The upper unitriangular subgroup is generated by \emph{elementary matrices} $e_{i,j}(r)$ with $i < j$ --- that is, the matrix $e_{i,j}(r) \in \GL_{n+1}(\Z[\phi])$ has an off-diagonal $(i,j)$-entry equal to $r \in \Z[\phi]$, all its diagonal entries are $1$, and its entries elsewhere are all zero. Looking back at the decomposition in~\eqref{eq:sesGamman}, one readily checks that $[\Gamma_n, \Gamma_n] \subseteq \mbU_{n+1}(\Z[\phi])$. Now, direct matrix computations yield the following identities in $B_{n+1}(\Z[\phi])$. 
\begin{align*} 
\underbrace{i\text{-th entry}} 
\phantom{,.,1) e_{i,j}(\phi r) (\Diag(1,\ldots,1,\phi,1,\ldots,1))^{-1} e_{i,j}(\phi r)^{-1} =} 
& \\
\Diag(1,\ldots,1,{\phi},1,\ldots,1) e_{i,j}(\phi r) (\Diag(1,\ldots,1,\phi,1,\ldots,1))^{-1} e_{i,j}(\phi r)^{-1} = & \\ 
\Diag(1,\ldots,1,{\phi},1,\ldots,1) e_{i,j}(\phi r) \Diag(1,\ldots,1,\phi^{-1},1,\ldots,1) e_{i,j}(-\phi r) = & \\
= e_{i,j}((\phi^2-\phi)r) = e_{i,j}(r). &
\end{align*}
Projecting the above onto $\Gamma_n = \PBplus_{n+1}(\Z[\phi])$ we have just verified that every generator of $\mbU_{n+1}(\Z[\phi]) \nsgp \Gamma_n$ can be written as a commutator. Thus $\Gamma_n' = [\Gamma_n,\Gamma_n]=\mbU_{n+1}(\Z[\phi])$. 

The derived length of $\Gamma_n$ is thus one plus the derived length of $\mbU_{n+1}(\Z[\phi])$. But the latter is well-known. Indeed, first note that, if $R$ is an integral domain, the derived length and the nilpotency class of $\mbU_{n+1}(R)$ are the same as those of $\mbU_{n+1}(K)$, where $K = \mathrm{Frac}(R)$ denotes the fraction field of~$R$. And over a field $K$ of characteristic zero --- which is the case of $\Q(\sqrt{5}) = \mathrm{Frac}(\Z[\phi])$ --- it is known that the derived length of $\mbU_{n+1}(K)$ is $\lceil \log_2(n+1)\rceil$; see, for instance, \cite[p.~16]{Wehrfritz} or \cite{GlasbySbgps}. (Alternatively, one can adapt a classic argument due to P.~Hall \cite[Section~2]{PHallPgps} and direct computations to check that $\mbU_{n+1}(\Z[\phi])^{(k)} = \gamma_{2^k}(\mbU_{n+1}(\Z[\phi]))$, and conclude from this that the derived series terminates after $\log_2(n+1)$ steps since the central series terminates at $\gamma_{n+2}(\mbU_{n+1}(\Z[\phi])) = 1$; cf. \cite{GlasbySbgps}, noting that the computation of generators of $\mbU_n^{(k)}$ and $\gamma_{k}(\mbU_n)$ over finite fields carries over to arbitrary commutative unital rings.)
\end{proof}

We have thus constructed solvmanifolds fulfilling almost all properties listed in the statement of \cref{thm:geometricapplication}. It remains to check its part~(ii) concerning Nielsen numbers of self-maps. To do so we need the following.

\begin{thm}[{Keppelmann--McCord \cite{KeppelmannMcCordSolv}, Dekimpe--Van den Bussche \cite{KarelIrisInfrasolv}}] \label{thm:KMcDVdB}
Suppose $M$ is a connected compact $\mathcal{NR}$-solvmanifold and let $f : M \to M$ be an arbitrary continuous self-map. Denote by $f_\ast$ the endomorphism of $\pi_1(M)$ induced by $f$. Then the `Anosov relation' holds, that is, 
\[N(f) = \begin{cases} R(f_\ast), & \text{ in case } R(f_\ast) < \infty, \\ 0 & \text{ otherwise}. \end{cases}\]
In particular, if $\pi_1(M)$ has property~$\Ri$, then any continuous self-map of $M$ that induces an automorphism on $\pi_1(M)$ has vanishing Nielsen number.
\end{thm}

In the following, we explain the terminology from \cref{thm:KMcDVdB} and show that each $M_n$ is an $\mathcal{NR}$-solvmanifold. Since $\pi(M_n)=\Gamma_{n}$ has property~$\Ri$ by \cref{pps:goldenRi}, $M_n$ being an $\mathcal{NR}$-solvmanifold will imply that the Nielsen number of any self-homotopy equivalence of $M_n$ is zero. In particular, any such self-map is homotopic to a fixed-point-free map by Wecken's theorem~\cite{Wecken}. Moreover, in case $\dim(M_n) \geq 5$ --- equivalently, $n \geq 2$ --- and $h : M \to M$ is a homeomorphism, a result of Kelly~\cite{KellyIsotopy} shows that $h$ is isotopic to a fixed-point-free map. 
\medskip

Given a group $G$ and a subgroup $H \leq G$, its \emph{isolator} is given by 
\[\sqrt[G]{H}=\{g\in G \mid \exists m\in \N \text{ such that }g^m \in H\}.\] 
Now let $G$ be a torsion-free group that fits into a short exact sequence of the form  
\[1 \into \sqrt[G]{[G,G]} \into G \onto \Z^k \onto 1\] 
such that $N := \sqrt[G]{[G,G]}$ is finitely generated torsion-free nilpotent and $k \in \Z_{\geq 0}$. 
Write $c$ for the nilpotency class of $N$, $\gamma_i(N)$ for the $i$-th term of its lower central series, and $N_i:=\sqrt[N]{\gamma_i(N)}$. One readily checks that the $N_i$ are normal subgroups of $G$ and form a central series of $N$ 
\[1 \nsgp N_c \nsgp \ldots \nsgp N_1=N\] 
whose factors are (finitely generated) free abelian. One can thus define actions of $G/N$ on each factor $N_i/N_{i+1}$, as follows: 
\[\rho_i: G/N \to \Aut\left(N_i/N_{i+1}\right) \quad \text{ is given by } \quad gN \mapsto (xN_{i+1} \mapsto gxg^{-1}N_{i+1}).\]

\begin{dfn} Suppose a group $G$ fits into the set-up above. We say that $G$ has the $\mathcal{NR}$\emph{-property} if, for all $i \in \{1, \ldots, c\}$ and all $gN \in G/N$, the above automorphism $\rho_i(gN): N_i/N_{i+1} \to N_i/N_{i+1}$ has no nontrivial roots of unity as eigenvalues (when $\rho_i(gN)$ is regarded as an element of $\GL_{r_i}(\Z)$, where $r_i = \rk(N_i/N_{i+1})$). 
	
A compact connected solvmanifold $M$ is said to be an $\mathcal{NR}$\emph{-solvmanifold} if its fundamental group $\pi_1(M)$ satisfies the $\mathcal{NR}$-property.
\end{dfn}

Recall that $\Gamma_n = \PBplus_{n+1}(\Z[\phi])$ fits into the short exact sequence  
\begin{equation}\tag{\ref{eq:sesGamman}} 1 \into \mbU_{n+1}(\Z[\phi]) \into \Gamma_n \onto \Z^n \onto 1,\end{equation}
where $\mbU_{n+1}(\Z[\phi])$ is the group of upper unitriangular matrices over $\Z[\phi]$ and $\Z^n \cong \gera{\phi}^n$ is the diagonal part of $\PBplus_{n+1}(\Z[\phi])$, and that $\mbU_{n+1}(\Z[\phi])$ is (finitely generated torsion-free) nilpotent of class $c = n$. In the next lemmata, we compute $N:=\sqrt[\Gamma_n]{{[\Gamma_n,\Gamma_n]}}$ and $N_i=\sqrt[N]{\gamma_i(N)}$. 

\begin{lem}\label{lem:gammazero} With the notation above, $N=\sqrt[\Gamma_n]{[\Gamma_n,\Gamma_n]}=\mbU_{n+1}(\Z[\phi])$. 
\end{lem}
\begin{proof} 
In \cref{lem:derivedlengthGamman} we have already verified that $[\Gamma_n, \Gamma_n] = \mbU_{n+1}(\Z[\phi])$. 
The fact that $\sqrt[\Gamma_n]{\mbU_{n+1}(\Z[\phi])}=\mbU_{n+1}(\Z[\phi])$ follows from the exact sequence~\eqref{eq:sesGamman}.
\end{proof}

\begin{lem}\label{lem:gammas} For each $k \in \{1, \ldots, n\}$, we have $N_k=\gamma_k(\mbU_{n+1}(\Z[\phi]))$. 
\end{lem}
\begin{proof} The case $k=1$ follows from \cref{lem:gammazero}. Fix $k \in \{2, \dots, n\}$. Let us show that, if $\mathbf{y} \notin \gamma_k(\mbU_{n+1}(\Z[\phi]))$, then no power of $\mathbf{y}$ belongs to $\gamma_k(\mbU_{n+1}(\Z[\phi]))$.

One promptly checks that 
an element $\mathbf{x}=(x_{ij})$ of $\gamma_k(\mbU_{n+1}(\Z[\phi]))$ is an $(n+1)\times(n+1)$ matrix whose entries satisfy 
\[ x_{ij} = \begin{cases} 0,    & \text{ if either } i>j \text{ or } i < j < i+k,\\
                           1,     & \text{ if } i=j, 
\end{cases}\]
with no restrictions on the other entries besides belonging to $\Z[\phi]$.

Let us establish an order on the set $\{(i,j) \mid 1 \leq i \leq j \leq n+1\}$. We say that $(i,j) < (m, \ell)$ if either $|j-i| < |\ell-m|$ or if $|j-i| = |\ell-m|$ and $i < m$. 

A matrix $\mathbf{y}=(y_{ij}) \in \mbU_{n+1}(\Z[\phi]) \setminus \gamma_k(\mbU_{n+1}(\Z[\phi]))$ is an upper unitriangular matrix such that $y_{pq} \neq 0$ for some indices $p$ and $q$ satisfying $p < q < p+k$. 
Assume $(p,q)$ is the smallest such pair with respect to the above ordering. We now show that all powers of $\mathbf{y}$ also have nonzero $(p,q)$-entry. 
For each $m \in \N$, denote the entries of $\mathbf{y}^m$ by $y_{ij}^{(m)}$. Then 
\[y_{pq}^{(2)}= \sum_{i=1}^n y_{pi}y_{iq} = \sum_{i=p}^q y_{pi}y_{iq} = 2y_{pq}+\sum_{i=p+1}^{q-1} y_{pi}y_{iq}.\] 
Now, for $p < i <q$, we see that $(p,i) < (p,q)$, hence $y_{pi}=0$. We then conclude $y_{pq}^{(2)} = 2y_{pq} \neq 0$. Moreover, notice that $y_{pj}^{(2)}=0$ for all $j <q$. Using induction, we now show that $y_{pq}^{m}=my_{pq}$ and $y_{pj}^{(m)}=0$ for all $m \in \N$ and $j <q$. In fact, assuming this is true in case $m-1$, we immediately get $y_{pj}^{(m)}=0$ and, moreover, 
\[y_{pq}^{(m)}= \sum_{i=p}^q y_{pi}y_{iq} = y_{pp}^{(m-1)}y_{pq}+y_{pq}^{(m-1)}y_{qq}+\sum_{i=p+1}^{q-1} y_{pi}y_{iq} = my_{pq}.\] 
Since the additive group $\Z[\phi]$ is torsion-free, we get $y_{pq}^{(m)}\neq 0$. For negative powers note that, if $\mathbf{y}^{-1}=(w_{ij})$, then $w_{pq}=-y_{pq} \neq 0$. Thus we can repeat the argument above for $\mathbf{y}^{-1}$.
\end{proof}

We are now ready to show that $\Gamma_n$ has the $\mathcal{NR}$-property. 
\begin{pps} \label{pps:GammasNR}
For each $n \geq 1$, the group $\Gamma_n$ has the $\mathcal{NR}$-property. In particular, the manifolds $M_n$ ($n \geq 1$) are $\mathcal{NR}$-solvmanifolds. 
\end{pps} 
\begin{proof} We need to show that the automorphisms $\rho_i: \Gamma_n/\mbU_{n+1}(\Z[\phi]) \to \Aut(N_i/N_{i+1})$ do not have nontrivial roots of unity as eigenvalues. 
Recall that $\Gamma_n/\mbU_{n+1}(\Z[\phi])$ corresponds to the diagonal part of $\Gamma_n = \PBplus_{n+1}(\Z[\phi])$. 
Thus, a typical element of this quotient is 
the class of a diagonal matrix $\mathbf{d}=\textup{Diag}(\phi^{\epsilon_1}, \dots, \phi^{\epsilon_{n+1}})$ modulo scalar matrices. 
Hence, $\rho_i$ is given by 
\[\rho_i(\mathbf{d}): \mathbf{x}N_{i+1} \mapsto \mathbf{dxd}^{-1}N_{i+1}.\] 
By \cref{lem:gammas}, an element $\mathbf{x}N_{i+1} \in N_i/N_{i+1}$ has the form 
\[\mathbf{x}=\prod_{k=1}^{n-i}\ekl{k,k+i}(r_{k,k+i})N_{i+1},\] 
for some $r_{m,\ell} \in \Z[\phi]$, where the $e_{i,j}(r) \in \mbU_{n+1}(\Z[\phi])$ denote elementary matrices. One easily checks that the following identities hold in $B_{n+1}^+(\Z[\phi])$.  
\[ \textup{Diag}(u_1, \dots, u_{n+1}) \eij(r) (\textup{Diag}(u_1, \dots, u_{n+1}))^{-1} = \eij(u_i u_j^{-1} r).\] 
Thus, projecting onto $\Gamma_n = \PBplus_{n+1}(\Z[\phi])$ we see that 
\[\mathbf{dxd}^{-1}N_{i+1} = \prod_{k=1}^{n-i}\mathbf{d}\ekl{k,k+i}(r_{k,k+i})\mathbf{d}^{-1}N_{i+1} = \prod_{k=1}^{n-i}\ekl{k,k+i}(\phi^{\epsilon_k-\epsilon_{k+i}}r_{k,k+i})N_{i+1}.\] 
As $r_{k,k+i} \in \Z[\phi] = \Z \oplus \Z\phi \cong \Z^2$, we can write $r_{k,k+i}=a_{k,k+i}+\phi b_{k,k+i}$ for some $a_{k,k+i}, b_{k,k+i} \in \Z$. 
We then see that the action of $\rho_i(\mathbf{d})$ restricted on each matrix entry $(k,k+1)$ is given by 
\[\rho_i(\mathbf{d}): \ekl{k,k+i}(a_{k,k+i}+b_{k,k+i}) \mapsto \ekl{k,k+i}(\phi^{\epsilon_k-\epsilon_{k+i}}a_{k,k+i}+\phi^{\epsilon_k-\epsilon_{k+i}+1}b_{k,k+i}).\] 
This can be seen as the map $\Z[\phi] \to \Z[\phi]$ given by multiplication by~$\phi^{\epsilon_k-\epsilon_{k+i}}$. 

It then suffices to show that, for arbitrary $m \in \Z \setminus \{0\}$, the map $f_m:\Z[\phi] \to \Z[\phi]$ given by multiplication by $\phi^m$ has no nontrivial roots of unity as eigenvalues (when regarded as an element of $\GL_2(\Z)$).  
Because $\phi^2=\phi+1$ we see that, for any $a,b \in \Z$, 
\[f_1(a+\phi b)= 
b +(a+b)\phi.\] 
Thus, $f_1$ is represented by the matrix 
\[M_{f_1} = \begin{pmatrix}
	0 & 1 \\ 1 & 1
\end{pmatrix} \in \GL_2(\Z).\] 
It is clear that $f_m = f_1^m$, so that $f_m$ is represented my $M_{f_1}^m$.

Now, we see that the eigenvalues of $M_{f_1}$ are precisely $\phi$ and $\nu=\frac{1-\sqrt{5}}{2}$, i.e., the other root of the polynomial $x^2-x-1 \in \Z[x]$ besides $\phi$. This shows that the eigenvalues of $f_m$ are $\phi^m$ and $\nu^m$. Since $\vert\nu\vert < 1 < \phi$, no term of the sequences $(\nu^m)_{m\in \N}$, $(\nu^{-m})_{m\in \N}$, $(\phi^m)_{m\in \N}$, and $(\phi^{-m})_{m\in \N}$ approaches $1$. Thus, no root of unity is an eigenvalue of any $f_m$. 
\end{proof}

\noindent {\bf Finishing off the proof of \cref{thm:geometricapplication}.} Combine \cref{pps:GammasNR} and \cref{thm:KMcDVdB} for the manifolds $M_n$ with $\pi_1(M_n) \cong \Gamma_n$. \qed

\begin{rmk}
The proof of \cref{thm:geometricapplication} given above works the same for the groups $B^+_{n+1}(\Z[\phi])$. Denoting by $M_n'$ the corresponding solvmanifolds, their dimensions are $\dim(M_n') = \dim(M_n)+1 = (n+1)^2$. 

There are also other rings of integers $\ri_{\Q(\sqrt{d})}$ of quadratic number fields to which the techniques above apply; cf. \cref{pps:zft}. It would be interesting to classify solvmanifolds with the properties listed in \cref{thm:geometricapplication} that can be so constructed, and how they are distinguished geometrically.
\end{rmk}

%%%
\section{Reidemeister numbers, group extensions, triangular matrices} \label{sec:LemmataRinfty}
We embark the second part of the paper, first recalling standard results regarding twisted conjugacy classes and invariant subgroups and quotients. 
If $G$ fits into a short exact sequence $N \into G \onto Q$ with $N$ invariant under $\phee \in \Aut(G)$, we denote by $\phee' \in \Aut(N)$ and $\overline{\phee} \in \Aut(Q)$ the automorphisms induced by $\phee$. Explicitly, $\phee' = \phee|_N$ while $\barra{\phee}$ is given by $\barra{\phee}(gN) := \phee(g)N$. 

\begin{lem}[{\cite[Proposition~1.2]{Daciberg}}] \label{lem:desempre}
Suppose $\phee \in \Aut(G)$ and that $G$ fits into a short exact sequence $N \into G \onto Q$ where $N$ is $\phee$-invariant. Then 
$R(\phee) \geq R(\overline{\phee})$. 
\end{lem}

The following result has been used in the literature before. Since we are unaware of references containing a full proof and because similar arguments will be needed again later, we include a proof here for completeness. 

\begin{pps}\label{pps:phee} 
Suppose the short exact sequence $N \into G \onto Q$ splits and that $N$ is \emph{abelian} and $\phee$-invariant for $\phee \in \Aut(G)$. 
Then there exists an automorphism $\phee_0 \in \Aut(G)$ such that 
\begin{enumerate}
\item $N$ is $\phee_0$-invariant and $\phee_0' := \phee_0|_N = \phee|_N$,
\item the subgroup $Q \cong G/N$ of the semi-direct product $G = N \rtimes Q$ is also $\phee_0$-invariant, and both $\phee$ and $\phee_0$ induce the same automorphism $\barra{\phee}_0 = \barra{\phee} \in \Aut(Q)$ given by $\barra{\phee}_0(gN) = \barra{\phee}(gN) = \phee(g)N$, and
\item $R(\phee) = R(\phee_0)$.
\end{enumerate}
\end{pps}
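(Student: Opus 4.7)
The plan is to construct $\phee_0$ directly from the splitting, verify (i) and (ii) by inspection, and then deduce (iii) by a cocycle-level comparison of Reidemeister classes.

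First I would use the splitting $G = N \rtimes Q$ to define $\phee_0$ factor by factor: set $\phee_0|_N := \phee|_N$ (valid since $N$ is $\phee$-invariant) and $\phee_0|_Q := \barra{\phee}$ (viewing $\barra{\phee}$ as an automorphism of the subgroup $Q \leq G$ via the splitting), and extend multiplicatively by $\phee_0(nq) := \phee(n)\,\barra{\phee}(q)$. The only nontrivial step in checking that $\phee_0$ is a homomorphism is the compatibility $\phee(qnq^{-1}) = \barra{\phee}(q)\phee(n)\barra{\phee}(q)^{-1}$ for $n \in N$, $q \in Q$. Writing $\phee(q) = c(q)\,\barra{\phee}(q)$ with $c(q) \in N$ (the set-theoretic decomposition of $\phee(q)$ via the splitting), this identity follows by applying $\phee$ to $qnq^{-1}$ and using that $N$ is abelian, so the factor $c(q)$ commutes with the conjugate $\barra{\phee}(q)\phee(n)\barra{\phee}(q)^{-1} \in N$. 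Bijectivity of $\phee_0$ then reduces to bijectivity of $\phee|_N$ and $\barra{\phee}$, and items (i) and (ii) hold by construction.

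For (iii), the map $c: Q \to N$ constructed above is a $1$-cocycle for the $\barra{\phee}$-twisted $Q$-action on $N$, satisfying $c(rs) = c(r) + \barra{\phee}(r)\cdot c(s)$, and it encodes the discrepancy between $\phee$ and $\phee_0$: one verifies $\phee(g) = c(\pi(g))\,\phee_0(g)$ for every $g \in G$, so the respective twisted actions satisfy $h g \phee(h)^{-1} = \bigl(h g \phee_0(h)^{-1}\bigr)\,c(\pi(h))^{-1}$. Since both $\phee$ and $\phee_0$ induce the same automorphism $\barra{\phee}$ on $Q$, the Reidemeister sets $\mc{R}(\phee)$ and $\mc{R}(\phee_0)$ each project to the common set $\mc{R}(\barra{\phee})$, and the plan is to construct a bijection $\mc{R}(\phee) \leftrightarrow \mc{R}(\phee_0)$ fiberwise. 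Over a representative $q$, each fiber is parametrized by the orbits on a suitable quotient of $N$ of the $\barra{\phee}$-twisted stabilizer of $q$ inside $Q$, acting in one case linearly and in the other by a $c$-twisted affine version of the same action.

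The main obstacle will be this fiberwise orbit comparison: the $c$-twisted and untwisted stabilizer actions are a priori distinct, and matching their orbits requires a careful use of the cocycle identity together with the abelianness of $N$. I expect Lemma~\ref{lem:ignoreinner} to play the central role at this point, allowing one to absorb the cocycle contribution at the level of each fiber into an inner automorphism twist so that the orbit counts agree term by term and assemble into the desired global bijection $\mc{R}(\phee) \leftrightarrow \mc{R}(\phee_0)$.
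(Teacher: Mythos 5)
Your construction of $\phee_0$ and the verification of parts (i) and (ii) match the paper's argument: you use the same definition (in the paper's notation $\phee(q) = n_q q_\phee$, in yours $\phee(q) = c(q)\barra{\phee}(q)$, so $\phee_0(nq) := \phee(n)\,q_\phee$), the same use of the abelianness of $N$ to absorb the factor $c(q)$ in the homomorphism check, and the same reduction of bijectivity to the restrictions to $N$ and $Q$. Up to that point the proposal is correct and essentially identical to the paper.

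Part (iii) contains a genuine gap. You correctly record that $\phee(g) = c(\pi(g))\,\phee_0(g)$ and that both automorphisms induce the same $\barra{\phee}$, so that both Reidemeister sets project onto $\mc{R}(\barra{\phee})$; but the fiberwise comparison --- which you yourself flag as ``the main obstacle'' --- is precisely the content of the claim, and it is left unproven. Moreover, the tool you nominate for closing it, Lemma~\ref{lem:ignoreinner}, does not apply as stated: that lemma compares $\phee_0$ with $\iota_g \circ \phee_0$ for a \emph{fixed} $g$, whereas here the correction factor $c(\pi(h))$ varies with $h$, so $\phee$ is not of the form $\iota_g \circ \phee_0$ for any single $g$. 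Concretely, over a class $[q]_{\barra{\phee}}$ the two induced actions on $\mf{R}(\iota_q \circ \phee', \id_N)$ differ by $q'$-dependent translations coming from the cocycle $c$, and showing these do not change the number of orbits is a nontrivial step that still has to be carried out. The paper sidesteps this by invoking an existing structure theorem for Reidemeister classes of extensions (\cite[Theorem~3.2]{KarelSam}, going back to \cite[Section~1]{Daciberg}), whose decomposition of $\mf{R}(\phee,\id_G)$ depends only on the pair $(\phee|_N, \barra{\phee})$, which $\phee$ and $\phee_0$ share by construction. To complete your route you would need either to perform the fiberwise orbit matching explicitly or to cite such a result.
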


In other words, in the set-up of \cref{pps:phee} one can replace $\phee \in \Aut(N \rtimes Q)$ by some automorphism that preserves {both} $N$ and $Q$ while having the same Reidemeister number as $\phee$.

\begin{proof} Each element~$g$ of $G \cong N \rtimes Q$ is of the form $g=nq$ for unique $n \in N$ and $q \in Q$. Thus, for any $q \in Q$, there exist unique $n_q \in N$ and $q_\phee \in Q$ such that $\phee(q)=n_q q_\phee$. 
We define $\phee_0$ by setting 
\[\phee_0(n)=\phee(n) \text{ for all } n \in N \text{ and }\phee_0(q)=q_\phee \text{ for all } q \in Q.\]
Notice that, for $d,q \in Q$ 
we have that $\phee_0(q d)=q_\phee d_\phee$. 
Indeed, by definition of the symbols $n_x$ and $x_\phee$ for $x \in Q$ we have 
\[(n_{qd}) (qd)_\phee = \phee(qd) = n_q q_\phee n_d d_\phee = n_q q_\phee n_d q_\phee^{-1} q_\phee d_\phee = (n_q q_\phee n_d q_\phee^{-1}) (q_\phee d_\phee),\]
 which yields $(qd)_\phee = q_\phee d_\phee$ by uniqueness of expression.

It is clear that $\phee_0(Q) \subseteq Q$, $\phee_0|_{N}=\phee|_{N}$ and $\barra{\phee}_0=\overline{\phee}$. We now show that $\phee_0$ is a homomorphism. Given $g,h \in G$ with $g=ab$ and $h=cd$, where $a,c \in N$ and $b,d \in Q$, it holds
\begin{align*} \phee_0(g h) &=\phee_0(a b c b^{-1} b d)\\
														 &=\phee(a b c b^{-1}) b_\phee d_\phee\\ 
														 &=\phee(a) b_\phee b_{\phee}^{-1} \phee(b c b^{-1}) b_\phee d_\phee.
\end{align*}
Using the fact that $b_{\phee} = n_{b}^{-1} \phee(b)$ we see that 
\[b_{\phee}^{-1} \phee(b c b^{-1}) b_\phee = \phee(b)^{-1} n_b \phee(b c b^{-1}) n_{b}^{-1} \phee(b).\]
Here we use the hypothesis that~$N$ is abelian, so that 
\[n_b \phee(b c b^{-1}) n_{b}^{-1} = \phee(b c b^{-1}), \quad \text{ hence } \quad b_{\phee}^{-1} \phee(b c b^{-1}) b_\phee = \phee(c).\] 
Consequently, 
\[ \phee_0(g h) =\phee(a) b_\phee \phee(c) d_\phee = \phee_0(g) \phee_0(h).\]

The injectivity of $\phee_0$ follows immediately from the injectivity of $\phee|_N$ and of $\phee_0|_Q$. The former is evident since $\phee_0|_N = \phee|_N \in \Aut(N)$ and the latter holds because $q_\phee = 1$ if and only if $q = 1$. 

To check that $\phee_0$ is surjective, let $g = nq \in G$ be arbitrary, where $n \in N$ and $q \in Q$ are uniquely determined by $g$. The surjectivity of $\phee$ implies that there exist $o \in N$ and $r \in Q$ such that $\phee(or) = q$. On the other hand, we may write $\phee(or) = \phee(o) n_r r_\phee$ with $n_r \in N$ and $r_\phee \in Q$ defined as in the beginning of the proof. Now, since $\phee|_N \in \Aut(N)$ there exist $m,l \in N$ such that $\phee(m) = n$ and $\phee(l) = n_r$. It then follows that
\[ g = nq = \phee(m) \phee(or) = \phee(m) \phee(o) n_r r_\phee = \phee(m) \phee(o) \phee(l) r_\phee = \phee_0(molr), \]
i.e. $\phee_0$ is surjective. Thus $\phee_0$ is in fact an automorphism. 

The equality $R(\phee) = R(\phee_0)$ can be extracted from observations due to Gon\c{c}alves in \cite[Section~1]{Daciberg}. Alternatively, it is a straightforward consequence of \cite[Theorem~3.2]{KarelSam}, as we now explain. In their notation, $\mf{R}(f,id_H)$ stands for the set of Reidemeister classes of an automorphism $f$ of a group $H$ --- in particular, $R(f) = | \mf{R}(f,id_H) |$. The cited theorem together with the properties $\overline{\phee} = \overline{\phee}_0$ and $\phee|_N = \phee_0|_N$ of $\phee_0 \in \Aut(G)$ then yield the following equalities of sets, where $\sqcup$ denotes the disjoint union.
\begin{align*} 
\mf{R}(\phee,id_G) & =  \bigsqcup_{[\pi(g)]_{\overline{\phee}} \in \mf{R}(\overline{\phee},id_Q)} (\mu_g \circ \hat{\text{\emph{\i}}}_g) (\mf{R}(\iota_g \circ \phee|_N, id_N)) \\
& =
 \bigsqcup_{[\pi(g)]_{\overline{\phee}_0} \in \mf{R}(\overline{\phee}_0,id_Q)} (\mu_g \circ \hat{\text{\emph{\i}}}_g) (\mf{R}(\iota_g \circ \phee_0|_N, id_N)) = \mf{R}(\phee_0,id_G). %\\
\end{align*}
(There is a slight abuse of notation here. The maps $\mu_g$ and $\hat{\text{\emph{\i}}}_g$ denote, on the first line, the maps $\mf{R}(\iota_g \circ \phee, id_G) \xrightarrow{\mu_g} \mf{R}(\phee, id_G)$ and $\mf{R}(\iota_g \circ \phee|_N, id_N) \xrightarrow{\hat{\text{\emph{\i}}}_g} \mf{R}(\iota_g \circ \phee, id_G)$, respectively, whereas on the second line they actually denote the maps $\mf{R}(\iota_g \circ \phee_0, id_G) \xrightarrow{\mu_g} \mf{R}(\phee_0, id_G)$ and $\mf{R}(\iota_g \circ \phee_0|_N, id_N) \xrightarrow{\hat{\text{\emph{\i}}}_g} \mf{R}(\iota_g \circ \phee_0, id_G)$, respectively.) 
The above thus shows that the Reidemeister number of $\phee$ (and also of $\phee_0$, by definition) only depend on the Reidemeister numbers of their restrictions to $N$ and $Q$, whence $R(\phee) = R(\phee_0)$.
\end{proof}

\begin{rmk} 
\cref{pps:phee} is used in~\cite{TabackWong0}, but the hypothesis that~$A$ is $\phee$-invariant and abelian is missing in \cite[Corollary~2.3]{TabackWong0}. This omission does not affect the proof of their main result \cite[Theorem~5.4]{TabackWong0} since their corresponding normal subgroup~$A$ is characteristic and abelian. 
\end{rmk}

Since we shall deal with soluble groups later on, we will frequently need auxiliary results on Reidemeister classes of abelian groups. 

\begin{lem} \label{lem:ReidemeisterAbelian}
Suppose $A$ is a {finitely generated} abelian group and let $\phee \in \Aut(A)$. Denote by $t(A)$ the torsion subgroup of $A$ and let $r = \rk(A/t(A))$. The following are equivalent.
\begin{enumerate}
\item $R(\phee) = \infty$.
\item The automorphism $\phee$ has {infinitely many} fixed points.
\item $R({\phee}_{\mathrm{tf}}) = \infty$, where ${\phee}_{\mathrm{tf}}$ is the automorphism induced by $\phee$ on the torsion-free part $A / t(A) \cong \Z^r$.
\item The above map ${\phee}_{\mathrm{tf}} \in \Aut(A/t(A)) \cong \GL_r(\Z)$ has $1$ as Eigenvalue.
\item The above map ${\phee}_{\mathrm{tf}} \in \Aut(A/t(A))$ admits some nonzero fixed point.
\end{enumerate}
\end{lem}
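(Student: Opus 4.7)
The plan is to reduce the entire statement to elementary linear algebra over the free part $A/\Tor(A) \cong \Z^r$. For any abelian group, two elements are $\phee$-conjugate iff their difference lies in $(\phee-\id)(A)$, so $R(\phee) = |A/(\phee-\id)A|$ and likewise $R(\barra{\phee}) = |\Z^r/(\barra{\phee}-\id)\Z^r|$. This turns every assertion about Reidemeister numbers into a statement about the cokernel of the integral matrix $\barra{\phee} - \id \in \Mat_r(\Z)$.

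To obtain (i) $\Leftrightarrow$ (iii), I would observe that the quotient map $A \onto A/\Tor(A)$ induces a surjection $A/(\phee-\id)A \onto (A/\Tor(A))/(\barra{\phee}-\id)(A/\Tor(A))$ whose kernel is a quotient of the finite group $\Tor(A)$; hence $R(\phee)$ and $R(\barra{\phee})$ are simultaneously finite or infinite. (One direction is also immediate from Lemma~\ref{lem:desempre}\bref{PHEEvsPHEEbarra}.)

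For (iii) $\Leftrightarrow$ (iv) $\Leftrightarrow$ (v), I would argue via Smith normal form: writing $\barra{\phee}$ as a matrix $M \in \GL_r(\Z)$, the cokernel $\Z^r/(M-I)\Z^r$ is finite iff $\det(M-I) \neq 0$, in which case its order equals $|\det(M-I)|$. The vanishing of $\det(M-I)$ is in turn equivalent both to $1$ being an eigenvalue of $M$ and to $\ker(M-I)$ containing a non-zero rational vector; clearing denominators turns any such rational vector into a non-trivial fixed point of $M$ inside $\Z^r \cong A/\Tor(A)$.

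It remains to link (ii) with the preceding conditions. The fixed-point group $A^{\phee} = \ker(\phee-\id)$ is finitely generated abelian, so it is infinite iff its image in $A/\Tor(A)$ is infinite, and this image lies in $(A/\Tor(A))^{\barra{\phee}}$. Conversely, the preimage $B \leq A$ of $(A/\Tor(A))^{\barra{\phee}}$ carries the homomorphism $b \mapsto \phee(b)-b$ into the finite group $\Tor(A)$ with kernel $A^{\phee}$, so $A^{\phee}$ has finite index in $B$. Hence $A^{\phee}$ is infinite iff $\barra{\phee}$ has a non-trivial fixed point, giving (ii) $\Leftrightarrow$ (v). The argument is entirely routine; the only care needed is in tracking how the finiteness of $\Tor(A)$ controls the various finite-index comparisons, so no serious obstacle arises.
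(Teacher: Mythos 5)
Your proof is correct and complete: the identification $R(\phee)=|A/(\phee-\id)A|$ for abelian groups, the Smith-normal-form/determinant criterion for (iii)$\Leftrightarrow$(iv)$\Leftrightarrow$(v), and the finite-index comparisons controlled by $\Tor(A)$ for (i)$\Leftrightarrow$(iii) and (ii)$\Leftrightarrow$(v) all check out. The paper gives no proof of this lemma, only citations to the literature, and your argument is precisely the standard one found in those references.
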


The statements above make sense because the torsion part of an abelian group is (fully) characteristic. 
\cref{lem:ReidemeisterAbelian} has been extensively used in the literature; see e.g. \cite[Section~2 and Lemma~4.1]{Romankov}, \cite{KarelDacibergAbelian} or \cite[Section~4]{KarelSam}.

\subsection{Notation for groups of triangular matrices} \label{sec:osgrupos} 

Throughout we let $R$ denote a {commutative} ring {with unity}, unless explicitly said otherwise. We let $\Addi(R) = (R,+)$ and $\Mult(R) = (R^\times, \cdot)$ denote the underlying additive group and the group of units of the ring $R$, respectively. Denote by $\GL_n(R)$ the $n \times n$ general linear group over $R$. 

Here we typically denote affine $\Z$-subschemes of $\GL_n$ with boldface letters. 
The (standard) Borel subgroup of $\GL_n(R)$ is its soluble subgroup $\mbB_n(R) \leq \GL_n(R)$ of upper triangular matrices. Consider its subgroups $\mbU_n(R) \leq \mbB_n(R)$ and $\mbD_n(R) \leq \mbB_n(R)$ of (upper) unitriangular and diagonal matrices, respectively. We recall some fact about these groups that will be used throughout.

Write $\eij(r) \in \mbU_n(R)$ for the elementary matrix whose diagonal entries are~$1$, its $(i,j)$-entry with $i < j$ is $r \in R$, and all other entries are zero. 
For $i \in \set{1,\ldots,n}$ and $u \in R^\times$, denote by $\di(u) \in \mbD_n(R)$ the diagonal matrix whose $i$-th entry is $u$ and all other (diagonal) entries are~$1$. 
For a fixed $i$ and a subgroup $S \leq R^\times$, we write $\mc{D}_i(S) := \gera{ d_i(u) \mid u \in S}$. 

The group-theoretic relationship between elementary and diagonal matrices is most easily described via the following well-known sets of relations \cite{HahnO'Meara, Silvester}, which have appeared in the proofs of \cref{lem:gammazero} and \cref{pps:GammasNR}. Throughout this paper we write group commutators as $[g,h] = ghg^{-1}h^{-1}$.
\begin{align}
\label{rel:commutators}
\begin{split}
[\eij(r),\ekl{k,l}(s)] & =
\begin{cases}
\ekl{i,l}(rs) & \mbox{if } j=k,\\
1 & \mbox{if } i \neq l \text{ and } k \neq j,
\end{cases} \\
\di(u) \ekl{k,l}(r) \di(u)^{-1} & = 
\begin{cases}
\ekl{k,l}(ur) & \mbox{if } i=k,\\
\ekl{k,l}(u^{-1}r) & \mbox{if } i=l, \\
1 & \mbox{otherwise}.
\end{cases}
\end{split}
\end{align}
In particular, if $d = d_1(u_1) \cdots d_n(u_n) \in \mbD_n(R)$, one has 
\[ d \eij(r) d^{-1} = \eij(u_i u_j^{-1} r). \] 
The set of relations~\eqref{rel:commutators} is also referred to as \emph{elementary} or \emph{commutator relations}. Elements of $\mbU_n(R)$ thus decompose uniquely as a product of the matrices $\eij(r)$, ordered according to the superdiagonals. That is, if $x \in \mbU_n(R)$, then there are uniquely determined elements $r_{i,j} \in R$ for which 
\begin{align} \label{rel:elementsofUn} 
\begin{split}
x = & \phantom{.} e_{1,2}(r_{1,2}) e_{2,3}(r_{2,3}) \cdot \ldots \cdot e_{n-1,n}(r_{n-1,n}) \cdot \\
 & \cdot  e_{1,3}(r_{1,3}) \cdot \ldots \cdot e_{n-2,n}(r_{n-2,n}) \cdot \ldots \ldots \cdot e_{1,n}(r_{1,n}). 
\end{split}
\end{align} 

It is clear that $\mbB_n(R)$ splits as a semi-direct product $\mbB_n(R) = \mbU_n(R) \cdot \mbD_n(R) \cong \mbU_n(R) \rtimes \mbD_n(R)$, where the action of $\mbD_n(R)$ on $\mbU_n(R)$ is given by conjugation~\eqref{rel:commutators}. 
As we assume $R$ to be commutative, the diagonal subgroup $\mbD_n(R)\cong \Mult(R)^n$ is abelian and the unitriangular subgroup $\mbU_n(R)$ is nilpotent of class $n-1$. In particular, $\mbB_n(R)$ is soluble of class at most $n$. 

We now introduce a variant of $\mbB_n(R)$ whose diagonal subgroup is torsion-free.
Recall that any abelian group $A$ decomposes as the direct product $A = t(A) \cdot A_{\mathrm{tf}} \cong t(A) \times A_{\mathrm{tf}}$ of its torsion subgroup $t(A)$ and a torsion-free part $A_{\mathrm{tf}}$. More precisely, the torsion subgroup 
\[ t(A) = \set{ a \in A \mid a \text{ has finite order} } \]
is a (fully) characteristic subgroup and admits a group-theoretic complement, denoted $A_{\mathrm{tf}} \leq A$. 
While $A_{\mathrm{tf}}$ is not unique \emph{as a subset} of $A$, it is unique up to group isomorphism. 

Given a subgroup $S \leq R^\times$ write $D_n(S) := \gera{\set{d_i(u) \in \mbD_n(R) \mid u \in S}} = \prod_{i=1}^n \mc{D}_i(S)$ for the corresponding subgroup of diagonal matrices. The group $B_n^+(R)$ is then defined (intrinsically) as 
\[ B_n^+(R) :=  \mbU_n(R) \rtimes \left(\frac{\mbD_n(R)}{D_n(t(R^\times))}\right),\] 
where the action of ${\mbD_n(R)}/{D_n(t(R^\times))}$ on $\mbU_n(R)$ is induced by relations~\eqref{rel:commutators}. Extrinsically, writing $R_{\mathrm{tf}}$ for any choice of complement of $t(R^\times)$, one has $B_n^+(R) \cong \mbU_n(R) \rtimes D_n(R_{\mathrm{tf}})$ as an explicit subgroup of $\mbB_n(R)$. Thus $B_n^+(R)$ is just `the' subgroup of $\mbB_n(R)$ without torsion on the diagonal, which is unique up to isomorphism. Of course, if $R^\times$ is itself a torsion group, then the diagonal part of $B_n^+(R)$ is trivial, in which case $B_n^+(R) = \mbU_n(R)$.

We now turn to a further group of interest which is obtained by modding out scalar matrices. To be precise, let $Z_n(R)$ be the central subgroup 
\[Z_n(R) = \set{u\cdot \mb{1}_n \in \GL_n(R) \mid u \in R^\times} = \set{d_1(u)\cdots d_n(u) \mid u \in R^\times} \leq \mbD_n(R),\]
where we denote by $\mb{1}_n$ the $n \times n$ identity matrix. The \emph{projective upper triangular groups} $\PB_n(R)$ and $\PBplus_n(R)$ are defined as  
\[ \PB_n(R) = \frac{\mbB_n(R)}{Z_n(R)} = \mbU_n(R) \rtimes \frac{\mbD_n(R)}{Z_n(R)}\] 
and 
\[\PBplus_n(R) = \mbU_n(R) \rtimes \frac{\mbD_n(R)}{D_n(t(R)) Z_n(R)} \cong \mbU_n(R) \rtimes \frac{D_n(R_{\mathrm{tf}})}{Z_n(R_{\mathrm{tf}})}, \]
where $Z_n(R_{\mathrm{tf}}) = \{ u \cdot \mb{1}_n \in \GL_n(R) \mid u \in R_{\mathrm{tf}} \} = D_n(R_{\mathrm{tf}}) \cap Z_n(R)$.

\begin{rmk} \label{obs:centerBn} If $R$ is an integral domain, then $Z_n(R)$ (resp. $Z_n(R_{\mathrm{tf}})$) is either trivial or coincides with the center of $\mbB_n(R)$ (resp. of $B_n^+(R)$). Thus $\PB_n(R)$ (resp. $\PBplus_n(R)$) is a quotient of $\mbB_n(R)$ (resp. of $B_n^+(R)$) modulo a \emph{characteristic} subgroup whenever $R$ is an integral domain. 
\end{rmk}

The elements of the diagonal subgroups of $\PB_n(R)$ and $\PBplus_n(R)$ will typically be denoted by $[d]$. 
Any element $g \in \PB_n(R)$ (resp. $\PBplus_n(R)$) can thus be written uniquely as $g = u[d]$ with $u \in \mbU_n(R)$ and $d \in \mbD_n(R)$ (resp. $d \in \mbD_n(R_{\mathrm{tf}})$). Moreover, relations~\eqref{rel:commutators} imply that 
\begin{align} \label{rel:projectiveconjugation}
[d] \ekl{k,l}(r) [d]^{-1} = \ekl{k,l}(u_k u_l^{-1} r) & \phantom{a} & \text{ for } d = d_1(u_1) \cdots d_n(u_n).
\end{align}

The group $\mbB_2(R)$ has a close relative, namely the \emph{group of affine transformations} of the base ring $R$, defined as 
\[
\Aff(R) = \AffR{R} \cong \begin{pmatrix} * & * \\ 0 & 1 \end{pmatrix} \leq \GL_2(R).
\] 
In the above, the units $\Mult(R) = (R^\times,\cdot)$ act on the underlying additive group $\Addi(R) = (R,+)$ by multiplication. One defines $\Aff^+(R)$ similarly to $B_2^+(R)$, and it holds 
\[
\Aff^+(R) \cong \left\{\begin{pmatrix} \diamond & * \\ 0 & 1 \end{pmatrix} \, \middle| \, \diamond \in R_{\mathrm{tf}}, \ast \in R \right\} \leq \GL_2(R).
\] 
As it turns out, $\Aff(R)$ and $\Aff^+(R)$ are not new. 

\begin{lem} \label{lem:AffisPB}
$\PB_2(R) \cong \Aff(R)$ and $\PBplus_2(R) \cong \Aff^+(R)$. In particular, if $R$ is an integral domain, then such groups are either centerless or abelian.
\end{lem}

\begin{proof}
We argue for $\PB_2(R)$ and $\Aff(R)$ --- the other case is entirely analogous. Any $g \in \PB_n(R)$ can be written (uniquely) as $ g= e_{1,2}(r)[d_1(u_1)d_2(u_2)]$. The assignment 
\[
\xymatrix@R=2mm{
f : \PB_2(R) \ar[r] & {\Aff(R) \cong \left(\begin{smallmatrix} * & * \\ 0 & 1 \end{smallmatrix}\right) \leq \GL_2(R)} \\
  {e_{1,2}(r)[d_1(u_1)d_2(u_2)]} \ar@{|->}[r] & 
	{e_{1,2}(r) d_1(u_1 u_2^{-1})}
}
\]
is easily checked to be an isomorphism. 

\cref{obs:centerBn} assures that $Z_2(R)$ is either trivial or coincides with the center $Z(\mbB_2(R))$ of $\mbB_2(R)$, assuming $R$ to be an integral domain. Now if $Z_2(R)$ is trivial, then $\PB_2(R)=\mbB_2(R)=\mbU_2(R)$ is abelian. Otherwise $Z_2(R)=Z(\mbB_2(R))$, so that $\PB_2(R)=\mbB_2(R)/Z(\mbB_2(R))$ has trivial center. (In fact, direct computations show that the only matrices in $\mbB_2(R)$ commuting with all elements are the scalar ones.) 
\end{proof}

\subsection{Unitriangular matrices as a characteristic subgroup} \label{sec:IsUncharacteristic}

The reader familiar with linear algebraic groups (over a field $\K$) might recall that $\mbU_n$ is the commutator subgroup scheme of $\mbB_n$, hence (algebraically) characteristic in $\mbB_n$ \cite{BorelLAG}. And, in the previous \cref{sec:LemmataRinfty}, we collected results about the computation of Reidemeister numbers using characteristic subgroups and quotients. This naturally raises the question of whether the normal subgroup $\mbU_n(R)$ is characteristic in the groups considered here. However, this is not true in the general abstract case of $\mbB_n(R)$ and $B_n^+(R)$, not even over integral domains, as we now show.

\begin{pps} \label{pps:UnNOTcharacteristic}
Let $R$ be an integral domain and $S \in \{R^\times, R^\times_{\mathrm{tf}}\}$. If there exists a nontrivial group homomorphism $\veps : (R,+) \to (S,\cdot)$ such that $\veps(ur)=\veps(r)$ for all $u \in S$, then $\mbU_2(R)$ is \emph{not} a characteristic subgroup of $\mbB_2(R)$ (in case $S = R^\times$) or of $B^+_2(R)$ (in case $S = R^\times_{\mathrm{tf}}$).
\end{pps}

\begin{proof}
For simplicity, write $G(R)$ for $\mbB_2(R)$ (case $S = R^\times$) or $B_2^+(R)$ (case $S = R^\times_{\mathrm{tf}}$). Such a map $\veps$ induces a map
\[
\xymatrix@R=2mm{
\phee_\veps : G(R) \ar[r] & G(R) \\
  {\begin{pmatrix} u & r \\ 0 & v \end{pmatrix}} \ar@{|->}[r] & 
	{\begin{pmatrix} \veps(r) & 0 \\ 0 & \veps(r) \end{pmatrix}} \cdot {\begin{pmatrix} u & r \\ 0 & v \end{pmatrix}}.
}
\]
Since $\veps$ is a homomorphism with $\veps(ur)=\veps(r)$ for all $u \in S$ and because the matrix $\left(\begin{smallmatrix} \veps(r) & 0 \\ 0 & \veps(r) \end{smallmatrix}\right)$ is central, one readily checks that $\phee_\veps$ is also a homomorphism. It is also straightforward that $\ker(\phee_\veps)$ is trivial. As $G(R) = \gera{\left(\begin{smallmatrix} u & 0 \\ 0 & v \end{smallmatrix}\right), \left(\begin{smallmatrix} 1 & r \\ 0 & 1 \end{smallmatrix}\right) \mid u,v \in S, r \in R}$, the map $\phee_\veps$ is also surjective since $\left(\begin{smallmatrix} u & 0 \\ 0 & v \end{smallmatrix}\right) = \phee_\veps\left(\left(\begin{smallmatrix} u & 0 \\ 0 & v \end{smallmatrix}\right)\right)$ and $\left(\begin{smallmatrix} 1 & r \\ 0 & 1 \end{smallmatrix}\right) = \phee_\veps\left(\left(\begin{smallmatrix} \veps(r)^{-1} & 0 \\ 0 & \veps(r)^{-1} \end{smallmatrix}\right) \cdot \left(\begin{smallmatrix} 1 & r \\ 0 & 1 \end{smallmatrix}\right)\right)$. 
It is clear that $\mbU_2(R)$ is not $\varphi_\veps$-invariant, hence the result.
\end{proof}

\begin{exm} \label{exm:UnNOTcharacteristic}
For $G = \mbB_2$, take $R = \Z[t]$. Given $r = \sum_{i=0}^N f_i t^i \in R$ and taking $\veps(r) = (-1)^f \in \{-1,1\} = R^\times$, where $f = \sum_{i=0}^N f_i \in \Z$, one can apply \cref{pps:UnNOTcharacteristic}. Similarly for $G = B_2^+$, take $R = \Z[t,t^{-1}]$ (and $R^\times_{\mathrm{tf}} = \{t^n \mid n \in \Z\}$) and define $\veps(r) = t^f$, where $r = \sum_{i=-\infty}^\infty f_i t^i$ and $f = \sum_{i=-\infty}^\infty f_i \in \Z$ --- here, all but finitely many coefficients $f_i$ are nonzero. 

Hence there do exist integral domains for which $\mbU_n(R)$ is not necessarily characteristic in $\mbB_n(R)$ or in $B_n^+(R)$.
\end{exm}

Nevertheless, eliminating scalar matrices makes $\mbU_n(R)$ characteristic. Recall that the \emph{Hirsch--Plotkin radical} of a group $G$ is the {unique} maximal subgroup of $G$ with respect to being normal {and} locally nilpotent. (If $P$ is a group-theoretic property, a group $G$ is locally $P$ if every finitely generated subgroup of $G$ has property $P$.)

\begin{pps}\label{pps:char} 
If $R$ is an integral domain, then $\mbU_n(R)$ is the Hirsch--Plotkin radical (hence a characteristic subgroup) of $\PB_n(R)$ and of $\PBplus_n(R)$.  
\end{pps}

While the proposition is surely well-known, we were unable to find explicit references for it. For completeness we give a rather elementary proof showcasing the use of relations from \cref{sec:osgrupos}. The familiar reader might recall a result of Gruenberg \cite[Theorem~4]{GruenbergHypercenter} stating that the Hirsch--Plotkin radical of finitely generated linear groups is always nilpotent. Particularly, \cref{pps:char} recovers this fact for $\PB_n(R)$ and $\PBplus_n(R)$ --- however, we do not assume these groups to be finitely generated in \cref{pps:char}.

\begin{proof} 
We prove the case of $\PB_n(R)$, that of $\PBplus(R)$ being entirely analogous. If $R^\times$ is trivial, then $\PB_n(R) = \mbU_n(R)$, and thus $\PB_n(R)$ is its own Hirsch--Plotkin radical since it is nilpotent. 
(In the `positive' case, if $R^\times$ has no torsion-free units, then $\PBplus_n(R) = \mbU_n(R)$.) We thus assume from now on that $|R^{\times}|\geq 2$. (For the `positive' case, $|R^\times \setminus t(R^\times)| \geq 1$.)

Since $\mbU_n(R)$ is normal and nilpotent, it is contained in the Hirsch--Plotkin radical. Now let $M$ be an arbitrary locally nilpotent normal subgroup of $\PB_n(R)$. We shall show that $M \subseteq \mbU_n(R)$, which then implies that the Hirsch--Plotkin radical agrees with $\mbU_n(R)$. 
Suppose, on the contrary, that there exists $m \in M \setminus \mbU_n(R)$. 
Let $v \in \mbU_n(R)$ and $d = d_1(u_1)\dots d_n(u_n)\in \mbD_n(R)$ be such that $m=v[d]$.  

The assumption $m \notin \mbU_n(R)$ means that $[d]\neq [\mb{1}_n]$ in $\mbD_n(R)/Z_n(R)$. Consequently, there is an index $k \in \{1, \dots, n-1\}$ for which $u_k\neq u_{k+1}$. 
By the description~\eqref{rel:elementsofUn} of elements of $\mbU_n(R)$ and by the commutator relations~\eqref{rel:commutators}, the element $v \in \mbU_n(R)$ can be expressed as a product $v=e_{k,k+1}(r_{k,k+1})x$, where 
\begin{equation}\label{unipelem} 
x= \prod_{(i,j) \neq (k,k+1)}e_{i,j}(r_{i,j})\in \mbU_n(R). 
\end{equation}
To ease notation, write 
\[\mathbf{r}_j=r_{j,j+1} \quad \text{ and } \quad \overline{\mathbf{e}}_j(a)=e_{j,j+1}(a) \quad \text{ for each }  j \in \{1,\ldots,n-1\}, a \in R.\]
This allows us to write 
\[m = \overline{\mathbf{e}}_k(\mathbf{r}_{k}) x [d].\]

We make a case distinction for $\mathbf{r}_k$. First assume $\mathbf{r}_{k}\neq 0$ and set $\delta=d_k(u_k)d_{k+1}(u_{k+1}) \in \mbD_n(R)$. Since $M$ is normal, we must have $[\delta] m [\delta]^{-1} \in M$. Moreover, 
\[ [\delta] m [\delta]^{-1} = [\delta] \overline{\mathbf{e}}_k(\mathbf{r}_{k})[\delta]^{-1}[\delta] x [d] [\delta]^{-1} = \overline{\mathbf{e}}_k(u_k u_{k+1}^{-1} \mathbf{r}_{k})  [\delta] x [\delta]^{-1}[d] \]
by relations~\eqref{rel:projectiveconjugation}. 
Consider $y= [\delta] x [\delta]^{-1}$. Similarly to \cref{unipelem}, we have
\[y=\prod_{(i,j) \neq (k,k+1)}e_{i,k}(\hat{r}_{i,j}) \in \mbU_n(R)\] 
for some $\hat{r}_{i,j} \in R$. 
We can then  write 
 $[\delta] m [\delta]^{-1}=\overline{\textbf{e}}_{k}(u_ku_{k+1}^{-1}\mathbf{r}_{k})y[d]$. 
Let $s$ be the commutator of $m$ and $[\delta]$, i.e., $s = [m,[\delta]] \in M$. One has 
\begin{align*} 
s & = \overline{\mathbf{e}}_{k}(\mathbf{r}_{k})x[d]\left(\overline{\mathbf{e}}_{k}(u_ku_{k+1}^{-1}\mathbf{r}_{k})y[d]\right)^{-1} \\
									& = \overline{\mathbf{e}}_{k}(\mathbf{r}_{k})xy^{-1}\overline{\mathbf{e}}_{k}(-u_ku_{k+1}^{-1}\mathbf{r}_{k}) = \overline{\mathbf{e}}_{k}(\mathbf{r}_{k}(1-u_ku_{k+1}^{-1}))w
\end{align*} 
for some $w \in \mbU_n(R)$ of the form $w = \prod_{(i,j)\neq(k,k+1)} e_{i,j}(\hat{\hat{r}}_{i,j})$. 
Using relations~\eqref{rel:commutators} and~\eqref{rel:projectiveconjugation} again, the commutator $[s,m] \in M$ is given by 
\begin{align*} 
[s,m] 	&= \overline{\mathbf{e}}_{k}(\mathbf{r}_{k}(1-u_ku_{k+1}^{-1}))w \overline{\mathbf{e}}_{k}(\mathbf{r}_{k})x[d] s^{-1}m^{-1}\\
											&= \overline{\mathbf{e}}_{k}(\mathbf{r}_{k}(1-u_ku_{k+1}^{-1})+\mathbf{r}_{k}) \widetilde{w} [d] s^{-1}m^{-1},
\end{align*}
for some $\widetilde{w} = \prod_{(i,j)\neq(k,k+1)} e_{i,j}(\widetilde{r}_{i,j}) \in \mbU_n(R)$, and 
\begin{align*} 
[d]s^{-1}m^{-1}	&= [d]w^{-1} \overline{\mathbf{e}}_{k}(-\mathbf{r}_{k}(1-u_ku_{k+1}^{-1})) [d]^{-1}x^{-1}\overline{\mathbf{e}}_{k}(-\mathbf{r}_{k})\\
															&= [d]w^{-1}[d]^{-1} \overline{\mathbf{e}}_{k}(-u_ku_{k+1}^{-1}\mathbf{r}_{k}(1-u_ku_{k+1}^{-1}))x^{-1}\overline{\mathbf{e}}_{k}(-\mathbf{r}_{k})\\
															&= \overline{\mathbf{e}}_{k}(-u_ku_{k+1}^{-1}\mathbf{r}_{k}(1-u_ku_{k+1}^{-1})-\mathbf{r}_{k})\widetilde{x}
\end{align*}
for some $\widetilde{x} = \prod_{(i,j)\neq(k,k+1)} e_{i,j}(\widetilde{\widetilde{r}}_{i,j}) \in \mbU_n(R)$. Substituting, 
\begin{align*} 
[s,m] 	&= \overline{\mathbf{e}}_{k}(\mathbf{r}_{k}(1-u_ku_{k+1}^{-1})+\mathbf{r}_{k})\widetilde{w} \overline{\mathbf{e}}_{k}(-u_ku_{k+1}^{-1}\mathbf{r}_{k}(1-u_ku_{k+1}^{-1})-\mathbf{r}_{k})\widetilde{x}\\ 
 &= \mathbf{e}_{k}(\mathbf{r}_{k}(1-u_ku_{k+1}^{-1})^2) w'_1
\end{align*} 
for some $w'_1 = \prod_{(i,j)\neq(k,k+1)} e_{i,j}(t_{i,j,1}) \in \mbU_n(R)$. Inductively, one shows that for every $\ell \in \N$ there is a $w'_\ell = \prod_{(i,j)\neq(k,k+1)} e_{i,j}(t_{i,j,\ell}) \in \mbU_n(R)$ such that 
\begin{equation}\label{nonnilp} [s,_\ell m]= \overline{\mathbf{e}}_{k}(\mathbf{r}_{k}(1-u_ku_{k+1}^{-1})^\ell) w'_\ell,\end{equation}
where $[s,_\ell m]$ denotes the iterated commutator $[[s, m], \ldots,m] \in M$ with $\ell$ occurrences of $m$. 
Since $R$ is an integral domain and $u_k \neq u_{k+1}$, it follows that $\mathbf{r}_{k}(1-u_ku_{k+1}^{-1})^\ell\neq 0$ for all $\ell \in \N$ and thus $[s,_\ell m] \neq 1$. This contradicts the hypothesis that the (finitely generated) subgroup $\langle s, m\rangle$ of $M$ is nilpotent. Therefore $m \in \mbU_n(R)$. 

Now assume $\mathbf{r}_{k}=0$, that is, that $m=x[d] \in M$ where $x$ is given as in \cref{unipelem}. Since $M$ is normal, we have $\widetilde{m}:=\overline{\mathbf{e}}_{k}(1)m\overline{\mathbf{e}}_{k}(1)^{-1} \in M$. 
Moreover, 
\begin{align*} \widetilde{m} & :=\overline{\mathbf{e}}_{k}(1)m\overline{\mathbf{e}}_{k}(1)^{-1} 
																	=  \overline{\mathbf{e}}_{k}(1)x[d]\overline{\mathbf{e}}_{k}(-1)[d]^{-1}[d]\\
																	&=  \overline{\mathbf{e}}_{k}(1)x\overline{\mathbf{e}}_{k}(-u_ku_{k+1}^{-1})[d] =  \overline{\mathbf{e}}_{k}(1-u_ku_{k+1}^{-1})z[d]
\end{align*} 
for some $z = \prod_{(i,j)\neq(k,k+1)} e_{i,j}(t'_{i,j}) \in \mbU_n(R)$. In particular, such a $z$ is an element of $M$ that fits the computations done in the previous case $\mathbf{r}_{k}\neq 0$. Thus, proceeding as before, we can find $\widetilde{s} \in M$ such that $[\widetilde{s},_\ell \widetilde{m}]\neq 1$ for all $\ell \in \N$, contradicting nilpotency of the (finitely generated) subgroup $\langle \widetilde{s}, \widetilde{m} \rangle \leq M$. Hence $M \subseteq \mbU_n(R)$, as claimed. 
\end{proof}

Of course, there are other cases where $\mbU_n(R)$ is characteristic in $\mbB_n(R)$, as the following folklore result states, generalizing part of \cref{lem:gammazero}. 

\begin{lem} \label{pps:caracteristico} Consider the following conditions: 
\begin{enumerate} 
	\item\label{char2} There exists $u \in R^{\times}$ such that $u-1\in R^{\times}$; 
	\item\label{char3} Each $r \in R$ is of the form $r=u+v$ for some $u,v \in R^{\times}$;
	\item\label{char2PLUS} There exists a complement $R^\times_{\mathrm{tf}} \subseteq R^\times$ for the torsion subgroup $t(R^\times) \subseteq R^\times$ and a unit $u \in R^\times_{\mathrm{tf}}$ such that $u-1 \in R^\times_{\mathrm{tf}}$; 
	\item\label{char3PLUS} There exists a complement $R^\times_{\mathrm{tf}}$ for the torsion subgroup $t(R^\times) \subseteq R^\times$ such that each $r \in R$ is of the form $r=u-v$ for some $u,v \in R^\times_{\mathrm{tf}}$.
\end{enumerate}
If~\eqref{char2} or~\eqref{char3} holds, then $\mbU_n(R)$ is the commutator subgroup of $\PB_n(R)$ \emph{and of} $\mbB_n(R)$. Similarly, if~\eqref{char2PLUS} or~\eqref{char3PLUS} holds, then $\mbU_n(R)$ is the commutator subgroup of \emph{any} $G(R)$ with $G \in \{\PB_n, \mbB_n, \PBplus_n, B_n^+\}$. 
\end{lem}
\begin{proof} 

We first argue for $\mbB_n(R)$ --- the case of $\PB_n(R)$ is analogous. 
The inclusion $[\mbB_n(R),\mbB_n(R)] \subseteq \mbU_n(R)$ is clear by relations~\eqref{rel:commutators}. (In particular, $[B_n^+(R),B^+_n(R)] \subseteq \mbU_n(R)$ holds as well.) 
If~\eqref{char2} holds, one obtains the reverse inclusion by observing that, for $1 \leq i < j \leq n$ and $s \in R$, one has 
\[e_{i,j}(s)=[d_i(u),e_{i,j}((u-1)^{-1}s)].\] 
If~\eqref{char3} holds, the reverse inclusion 
follows from the following identity:
\[[d_i(u), e_{i,j}(1)]\cdot [d_i(-v), e_{i,j}(1)^{-1}]=e_{i,j}(u-1)e_{i,j}(v+1) =e_{i,j}(u+v),\]
for $1 \leq i < j \leq n$ and $u,v \in R^{\times}$.

Now consider the case of $B_n^+(R) \cong \mbU_n(R) \rtimes D_n(R^\times_{\mathrm{tf}})$ --- again, $\PBplus_n(R)$ is dealt with similarly. If~\eqref{char2PLUS} holds, the inclusion $\mbU_n(R) \subseteq [B_n^+(R),B^+_n(R)]$ is obtained as for $\mbB_n(R)$. Assuming~\eqref{char3PLUS}, use the identity 
\[e_{i,j}(r) = e_{i,j}(u-v) = e_{i,j}(u-1)e_{i,j}(-v+1) = [d_i(u), e_{i,j}(1)]\cdot [d_i(v), e_{i,j}(1)^{-1}],\]
which concludes the proof.
\end{proof}

We remark that, in \cref{pps:caracteristico}, condition~\eqref{char3PLUS} is the strongest. Indeed, \eqref{char3PLUS} implies \eqref{char3} since $-v$ with $v\in R^\times_{\mathrm{tf}}$ is also a unit. Moreover, condition~\eqref{char3} implies condition~\eqref{char2}, and similarly \eqref{char3PLUS} implies \eqref{char2PLUS} --- simply take $r=1$. 
Despite looking rather harmless, conditions~\eqref{char3} and~\eqref{char3PLUS} touch upon difficult questions from ring theory. To be precise, the problem of classifying rings whose elements are sums of units is wide open. Major breakthroughs in the topic are results by Raphael~\cite{Raphael} and Henriksen~\cite{HenriksenGenByUnits} in the 1970s. 
One could hope that the problem is completely solved for certain classes of rings or domains with more structure. But even for $S$-arithmetic rings the question is still being actively investigated. We refer the reader, e.g., to \cite{FreiUnitsOK,JardenNarkiewicz} for recent number-theoretic progress in this direction.

\section{Theorem~\ref{thm:Aff} and its proof} \label{sec:provadothmA}

\cref{thm:Aff} gives a sufficient criterion for our groups of interest to have property~$\Ri$ by reducing the problem to quotients of the groups $\Aff(R)$ and $\Aff^+(R)$. We restate it below in a more precise form.

\theoremstyle{plain}
\newtheorem*{thmA}{Theorem~\ref{thm:Aff}}
\begin{thmA} 
Let $R$ be an integral domain. Then the following hold. 
\begin{enumerate}
    \item\label{thmA1} Given $n \geq 2$ and $G \in \{\mbB_n, \PB_n\}$ (resp. $G^+ \in \{B_n^+, \PBplus_n\}$) and an arbitrary automorphism $\phee \in \Aut(G(R))$ (resp. $\phee \in \Aut(G^+(R))$), there exists an automorphism $\psi \in \Aut(\Aff(R))$ (resp. $\psi \in \Aut(\Aff^+(R))$) induced by $\phee$ and such that $R(\phee) \geq R(\psi_{\mathrm{tf}})$. Here, $\psi_{\mathrm{tf}}$ denotes the automorphism induced by $\psi$ on the torsion-free part of the diagonal subgroup $\Aff(R)/\mbU_2(R)$ (resp. $\Aff^+(R)/\mbU_2(R)$).
    \item\label{thmA2} Assume $R^\times$ is finitely generated. If $\psi_{\mathrm{tf}}$ as above fixes infinitely many points, then the original automorphism $\phee$ of $G(R)$ (resp. of $G^+(R)$) has $R(\phee) = \infty$. In particular, if each \emph{arbitrary} automorphism $\psi$ of $\Aff(R)$ (resp. of $\Aff^+(R)$) is such that $|\mathrm{Fix}(\psi_{\mathrm{tf}})|=\infty$, then all the groups $G(R)$ with $G \in \{\mbB_n, \PB_n \mid n \geq 2\}$ (resp. $G^+(R)$ with $G \in \{B_n^+, \PBplus_n \mid n \geq 2\}$) have property~$\Ri$.
\end{enumerate} 
\end{thmA}

The statement of \cref{thm:Aff} from the introduction is recovered as follows: the first part of the statement is a special case of part~\eqref{thmA1} above since $\Aff(R) \cong \PB_2(R)$ (resp. $\Aff^+(R) \cong \PBplus_2(R)$) --- cf. \cref{lem:AffisPB}; the remainder follows from part~\eqref{thmA2} combined with the fact that the torsion-free parts of $\Aff(R)/\mbU_2(R)$ and of $\Aff^+(R)/\mbU_2(R)$ agree --- they are isomorphic to $R_{\mathrm{tf}}^\times$; cf. \cref{sec:osgrupos}. 

The core of \cref{thm:Aff} lies in part~\eqref{thmA1}, which easily yields part~\eqref{thmA2}.

\begin{proofof}{Theorem~\ref{thm:Aff}\eqref{thmA2}}
Under the hypotheses of \eqref{thmA2}, \cref{lem:ReidemeisterAbelian} yields that $R(\psi_{\mathrm{tf}}) = \infty$ if and only if $\psi_{\mathrm{tf}}$ has infinitely many fixed points, whence $R(\phee) = \infty$ for any $\phee \in \Aut(G(R))$ (resp. $\Aut(G^+(R))$) in case \eqref{thmA1} holds.
\end{proofof}

The remainder of this section is thus dedicated to the proof of \cref{thm:Aff}\eqref{thmA1}. The main idea can be outlined as follows. As elucidated in \cref{sec:LemmataRinfty} and in the literature, characteristic subgroups help in estimating Reidemeister numbers. In contrast, here we identify a special subgroup $\mb{W}_n(R)$ of $\PB_n(R)$ (resp. $W_n(R) \leq\PBplus_n(R)$)  that is {not} necessarily characteristic but still retains some information about Reidemeister numbers. This subgroup, in turn, admits $\Aff(R)$ (resp. $\Aff^+(R)$) as a characteristic quotient, allowing us to pass over to their diagonal part (which is also a characteristic quotient). This strategy is summarized pictorially in the following diagram for $\phee$ an automorphism of $G(R) = \PB_n(R)$.

\begin{center}
\begin{tikzpicture}[scale=0.9, every node/.style={scale=0.9}]
 \node (A4) {$\underset{\circlearrowright \phee}{\PB_n(R)} = \left[\begin{smallmatrix} \ast & \ast & \ast & \ast & \ast \\ & \ast & \ddots & \ast & \ast \\ & & \ddots & \ddots & \ast \\& & & \ast & \ast \\  & & & & \ast \end{smallmatrix}\right] \leq \mathbb{P}\mathrm{GL}_n(R)$}; 
 \node (nada2) [below=of A4] {};
 \node (nada3) [below=of nada2] {}; 
	 \node (H4) [left=of nada3] {$\overset{\phee_1 \curvearrowright}{\mb{W}_n(R)}:=\left[\begin{smallmatrix} \textcolor{red}{\ast} & 0 & 0 & 0 & \textcolor{red}{\ast} \\ & \ast & \ddots & 0 & 0 \\ & & \ddots & \ddots & 0 \\& & & \ast & 0 \\  & & & & \textcolor{red}{\ast} \end{smallmatrix}\right]$};
    \draw[arrows = {Hooks[right]->}] (H4) to node [auto] {$\swarrow$} (A4);
		 	  \node (H3) [right=of nada2] {$\overset{\psi \curvearrowright}{\mathbb{A}\mathrm{ff}(R) = \begin{pmatrix} \textcolor{red}{\ast} & \textcolor{red}{\ast} \\ 0 & \textcolor{red}{1} \end{pmatrix}}$};
				\node (H5) [right=of H3] {$\overset{\psi_{\mathrm{tf}} \curvearrowright}{\begin{pmatrix} \textcolor{blue}{\ast} & 0 \\ 0 & 1 \end{pmatrix}}$};
   \draw[arrows = {->>}] (H4) to (H3);
   \draw[arrows = {->>}] (H3) to (H5);
 %}
 \end{tikzpicture}
 \end{center}

Assume henceforth that~$R$ is an integral domain. 
By \cref{lem:desempre} it suffices to consider the case of $G(R) = \PB_n(R)$ (resp. of $G^{+}(R) = \PBplus_n(R)$) since $\PB_n(R)$ is a characteristic quotient of $\mbB_n(R)$ (resp. $\PBplus_n(R)$ is a characteristic quotient of $B_n^+(R)$). Let us fix throughout $\phee \in \Aut(\PB_n(R))$ (resp. $\phee \in \Aut(\PBplus_n(R))$) an arbitrary automorphism. 

We break the proof down in several small steps, and start by defining the relevant subgroup. Recall that $Z(\mbU_n(R))$ is the center of $\mbU_n(R)$, which is invariant under the (conjugation) action of the diagonal part $\frac{\mbD_n(R)}{Z_n(R)}$ (resp. $\frac{D_n(R_{\mathrm{tf}})}{Z_n(R_{\mathrm{tf}})}$). We can thus set 
 \begin{align} \label{def:Wn}
 \begin{split}
 \mbW_n(R):= Z(\mbU_n(R)) \rtimes \frac{\mbD_n(R)}{Z_n(R)}\leq \PB_n(R), \\
 W_n(R) := Z(\mbU_n(R)) \rtimes \frac{D_n(R_{\mathrm{tf}})}{Z_n(R_{\mathrm{tf}})} \leq \PBplus_n(R).
 \end{split}
 \end{align}
We remark that, in the case $n = 2$, one has 
\[Z(\mbU_2(R)) = \mbU_2(R) = \mc{E}_{1,2}(R) :=\langle \{ \ekl{1,2}(r) \in \mbU_n(R) \mid r \in R \}\rangle,\] whence
\begin{equation} \label{eq:W2PB2}
\mbW_2(R) = \PB_2(R) \quad \text{ and } \quad W_2(R) = \PBplus_2(R).
\end{equation}

We now show that an automorphism $\phee$ of the starting group yields a well-behaved automorphism $\phee_1$ of $\mbW_n(R)$ or $W_n(R)$. We concentrate from now on on the case of $G(R)$ since the one of $G^{+}(R)$ is entirely analogous.

Recall that each element $p$ in the split extension $\PB_n(R) = \mbU_n(R) \rtimes \frac{\mbD_n(R)}{Z_n(R)}$ can be expressed uniquely in the form $p=x[d]$ with $x \in \mbU_n(R)$ and $[d] \in \mbD_n(R)/Z_n(R)$; cf. \cref{sec:osgrupos}. Using notation similar to that of the proof of \cref{pps:phee}, for any given $[d] \in \mbD_n(R)/Z_n(R)$ we denote by $n_{[d]} \in \mbU_n(R)$ and by $[d_\phee] \in \mbD_n(R)/Z_n(R)$ the (unique) elements satisfying $\phee([d])=n_{[d]}[d_\phee]$. 
Since $\mbU_n(R)$ is characteristic in $\PB_n(R)$ by \cref{pps:char}, the map $\varphi$ induces an automorphism $\barra{\phee}$ 
on the diagonal part $\PB_n(R)/\mbU_n(R) \cong \mbD_n(R)/Z_n(R)$. 
More explicitly, the map $\barra{\phee}$ induced by $\phee$ on $\PB_n(R)/\mbU_n(R) \cong \mbD_n(R) / Z_n(R)$ is given by 
\[ \barra{\phee}: \quad [d] \mapsto [d_\phee],\] 
where $[d_\phee]$ is as above. 
Now define $\phee_1: \mbW_n(R) \to \mbW_n(R)$ by setting 
\begin{enumerate}
	\item $\phee_1(z)=\phee(z)$ for all $z \in Z(\mbU_n(R))$, 
	\item $\phee_1([q])=[q_\phee]$ for all $[q] \in \frac{\mbD_n(R)}{Z_n(R)}$.
\end{enumerate} 

\begin{lem} \label{lem:automorfismovalido}
The map $\phee_1 : \mb{W}_n(R) \to \mb{W}_n(R)$ is an automorphism.
\end{lem}

\begin{proof}
We note that $\mb{W}_n(R)$ is a {split} extension of the {abelian} $\phee$-invariant normal subgroup $Z(\mb{U}_n(R))$ by the abelian group $\mb{D}_n(R)/Z_n(R)$. Thus the fact that $\phee_1$ is an automorphism is proved exactly as we have shown that the map $\phee_0$ from the proof of \cref{pps:phee} is an automorphism. 
\end{proof}

Our next goal is to construct the automorphism $\psi \in \Aut(\Aff(R))$ that relates to $\phee$. To this end we check that $\Aff(R)$ is a characteristic quotient of $\mb{W}_n(R)$. 

\begin{lem}\label{centerW} The center $Z(\mbW_n(R))$ is given by
\[Z(\mbW_n(R))=\left\{\left[d_1(u_1)d_2(u_2)\dots d_n(u_n)\right] \mid u_1=u_n\right\}\leq \frac{\mbD_n(R)}{Z_n(R)}.\]
\end{lem}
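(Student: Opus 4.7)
The plan is to compute $Z(\mbW_n(R))$ directly by testing centrality of a general element against generators of $\mbW_n(R)$, exploiting the fact that $Z(\mbU_n(R)) = \mc{E}_{1,n}(R)$ from Section~\ref{sec:unitriangularmatrices}. This identification rewrites $\mbW_n(R)$ as a semidirect product $\mc{E}_{1,n}(R) \rtimes \mbD_n(R)/Z_n(R)$, where the action comes from~\bref{rel:projectiveconjugation}: if $d = d_1(u_1) \cdots d_n(u_n)$, then $[d] e_{1,n}(r) [d]^{-1} = e_{1,n}(u_1 u_n^{-1} r)$. The ratio $u_1 u_n^{-1}$ is well-defined modulo $Z_n(R)$, which is what makes the action descend to the quotient.

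With this in hand, an arbitrary element of $\mbW_n(R)$ has the unique form $z[d]$ with $z \in \mc{E}_{1,n}(R)$ and $[d] \in \mbD_n(R)/Z_n(R)$. First I would test centrality against the generators $e_{1,n}(s) \in \mc{E}_{1,n}(R)$: since $\mc{E}_{1,n}(R) \cong (R,+)$ is abelian, $z$ commutes with $e_{1,n}(s)$, reducing the condition to $[d] e_{1,n}(s) [d]^{-1} = e_{1,n}(s)$ for all $s \in R$. This translates into $(u_1 u_n^{-1} - 1) s = 0$ for every $s$, which forces $u_1 = u_n$ because $R$ is an integral domain.

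Next I would test centrality against $[q] \in \mbD_n(R)/Z_n(R)$. Since the quotient group is abelian, $[d]$ commutes with $[q]$ automatically, so the condition collapses to $z = [q] z [q]^{-1}$ for all $[q]$. Writing $z = e_{1,n}(r)$ and $q = d_1(v_1) \cdots d_n(v_n)$, this becomes $(v_1 v_n^{-1} - 1) r = 0$ for every choice of $v_1, \ldots, v_n \in R^\times$. The underlying assumption throughout Theorem~\ref{thm:Aff} is that $\mbU_n(R)$ is characteristic in $\PB_n(R)$, which by Lemma~\ref{pps:char} requires $|R^\times| \geq 2$. Picking a non-trivial unit $u \in R^\times$ and taking $[q] = [d_1(u)]$ gives $(u-1)r = 0$, and the integral-domain hypothesis then forces $r = 0$, i.e.\ $z = 1$.

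Combining both tests yields the inclusion $Z(\mbW_n(R)) \subseteq \{[d_1(u_1) \cdots d_n(u_n)] \mid u_1 = u_n\}$, and the reverse inclusion is immediate: any such $[d]$ acts trivially on $\mc{E}_{1,n}(R)$ by the conjugation formula, and commutes with every $[q]$ by abelianness of $\mbD_n(R)/Z_n(R)$. I do not expect a serious obstacle; the only subtle point is ensuring the existence of a non-trivial unit when handling the diagonal direction, and that is already guaranteed by the standing assumptions of the section.
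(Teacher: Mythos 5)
Your proposal is correct and follows essentially the same route as the paper: both compute the centrality condition directly from the conjugation formula~\bref{rel:projectiveconjugation}, then use the integral-domain hypothesis together with the existence of a non-trivial unit to force $r=0$ and $u_1=u_n$. The only difference is organizational --- you test against unipotent and diagonal generators separately, whereas the paper conjugates by a single general element $e_{1,n}(s)[d]$ and then specializes --- and your explicit remark that $|R^\times|\geq 2$ is needed is a point the paper's proof uses only implicitly.
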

\begin{proof} 

Let $w\in \mbW_n(R)$ be central. 
By definition of $\mbW_n(R)$, there are $z=e_{1,n}(r) \in Z(\mbU_n(R))$ and $q=d_1(u_1) \dots d_n(u_n) \in \mbD_n(R)$ such that $w=z[q]$.
Any element $t \in \mbW_n(R)$ is of the form $t=e_{1,n}(s)[d]$, where $s \in R$ and $d=d_1(v_1)\dots d_n(v_n)\in \mbD_n(R)$. Thus, using relations~\eqref{rel:commutators} and~\eqref{rel:projectiveconjugation}, 
\begin{align*}twt^{-1} 	& = e_{1,n}(s)[d]e_{1,n}(r)[q][d]^{-1}e_{1,n}(-s) \\
												& = e_{1,n}(s)([d]e_{1,n}(r)[d]^{-1})([q]e_{1,n}(-s)[q]^{-1})[q] \\
												& = e_{1,n}((1-u_1u_{n}^{-1})s+v_1v_{n}^{-1}r)[q].
\end{align*} 
Since $w$ is central, we must have 
\begin{equation}\label{twt}
r=(1-u_1u_{n}^{-1})s+v_1v_{n}^{-1}r, 
\end{equation}
for all $s\in R$ and all $v_1,v_2 \in R^\times$. In particular, for $v_1\neq v_n$ and  $s= 0$, we obtain 
\[r= v_1v_{n}^{-1}r,\] which is only possible if $r=0$. Thus, \cref{twt} reduces to
\[(1-u_1u_{n}^{-1})s=0.\] For $s\neq 0$, this is only possible if $u_1=u_n$. 
\end{proof}

\begin{lem} \label{lem:isoAff} 
The map $f : \mb{W}_n(R) \to \Aff(R)$ given by 
\[f(e_{1,n}(r)[d_1(u_1)\dots d_n(u_n)])=e_{1,2}(r)d_1(u_1u_{n}^{-1})\]
induces an isomorphism $\barra{f} : \tfrac{\mbW_n(R)}{Z(\mbW_n(R))} \to \Aff(R)$.
\end{lem}

\begin{proof} 
We want to show that $f$ is an epimorphism satisfying $\ker(f)=Z(\mbW_n(R))$. One can easily verify that $f$ is a well-defined group homomorphism. To see that $f$ is onto, notice that any given element $a=e_{1,2}(s)d_1(u)\in \Aff(R)$ with $s\in R$ and $d_1(u) \in \mbD_2(R)$ is the image of $e_{1,n}(s)[d_1(u)] \in \mbW_n(R)$, where $d_1(u) \in\mbD_n(R)$.

Now, an element $e_{1,n}(r)[d_1(u_1)\dots d_n(u_n)] \in \mbW_n(R) $ belongs to $\ker(f)$ if and only if $e_{1,2}(r)d_1(u_1u_{n}^{-1})$ is the $2 \times 2$-identity matrix. That is, $r=0$ and $u_1=u_n$. Thus, $\ker(f)=Z(\mbW_n(R))$ by \cref{centerW}.
\end{proof}

We note that \cref{lem:isoAff} and \cref{eq:W2PB2} also imply \cref{lem:AffisPB}. 

We can finally construct the isomorphism $\psi_{\mathrm{tf}}$ from the statement of \cref{thm:Aff}. Indeed, as $\phee_1 \in \Aut(\mb{W}_n(R))$, it induces an automorphism 
\begin{equation}
\label{eq:psi} \psi: \frac{\mbW_n(R) }{Z(\mbW_n(R))} \longrightarrow \frac{\mbW_n(R) }{Z(\mbW_n(R))},
\end{equation}
which can be interpreted as an automorphism of $\Aff(R)$ thanks to \cref{lem:isoAff}. Note that the homomorphism $f$ from \cref{lem:isoAff} maps $\mathcal{E}_{1,n}(R) \leq \mbW_n(R)$ isomorphically onto $\mbU_2(R) = \mc{E}_{1,2}(R) \leq\Aff(R)$. Recalling that 
\[Z(\mbU_n(R)) = \mc{E}_{1,n}(R):=\langle \{ \ekl{1,n}(r) \in \mbU_n(R) \mid r \in R \}\rangle,\]
the identification $\mbW_n(R)/Z(\mbW_n(R)) \cong \Aff(R)$ given by $\barra{f}$ yields  
\[\frac{\mbW_n(R)/Z(\mbW_n(R))}{Z(\mbU_n(R))} \cong \frac{\Aff(R)}{\mbU_2(R)}.\] 
Altogether, the map $\psi$ induces an isomorphism $\barra{\psi}$ on the diagonal part 
\[\frac{\mbW_n(R)/Z(\mbW_n(R))}{Z(\mbU_n(R))} \cong \frac{\Aff(R)}{\mbU_2(R)} \cong \mc{D}_1(R^\times) = \langle d_1(u) \mid u \in R^{\times}\rangle \cong R^\times.\]
As the torsion subgroup $t(R^\times) \leq R^\times$ is also characteristic, we may define $\psi_{\mathrm{tf}}$ as the automormphism induced by $\barra{\psi}$ on $R^\times_{\mathrm{tf}}$. (Notice that, in the `plus' case --- that is, starting with $\PBplus_n(R)$ --- the quotient $\frac{{W}_n(R)}{Z({W}_n(R))}$ is isomorphic to $\Aff^+(R)$, so that $\frac{\Aff^+(R)}{\mbU_2(R)} \cong \mc{D}_1(R^\times_{\mathrm{tf}}) \cong R^\times_{\mathrm{tf}}$, whence $\barra{\psi} = \psi_{\mathrm{tf}}$.) By \cref{lem:desempre}, one has 
\begin{equation} \label{eq:1stinequality}
R(\phee_1) \geq R(\psi) \geq R(\barra{\psi}) \geq R(\psi_{\mathrm{tf}}).
\end{equation}

\begin{proofof}{Theorem~\ref{thm:Aff}} 
It remains to establish the inequality $R(\phee) \geq R(\psi_{\mathrm{tf}})$. To this end, we consider maps $\barra{\phee}$ and $\barra{\barra{\phee}}$ induced by $\phee$ on other quotients of $\PB_n(R)$. 

By \cref{pps:char} the original automorphism $\phee \in \Aut(\PB_n(R))$ induces an automorphism $\overline{\phee}$ on 
\[\frac{\PB_n(R)}{\mbU_n(R)} \cong \frac{\mbD_n(R)}{Z_n(R)}.\]
From \cref{centerW} we know that the center $Z(\mbW_n(R))$ of $\mbW_n(R)$ is actually contained in the diagonal part $\tfrac{\mbD_n(R)}{Z_n(R)}$ of $\PB_n(R)$. While $Z(\mbW_n(R))$ is not necessarily a characteristic subgroup of $\PB_n(R)$, we claim that $Z(\mbW_n(R))$ is $\barra{\phee}$-invariant. 

To see this, recall that $\mbU_n(R)$ is characteristic in $\PB_n(R)$, whence so is its center $Z(\mbU_n(R)) = \mc{E}_{1,n}(R)$. Hence $\phee \in \Aut(\PB_n(R))$ maps $e_{1,n}(1)$ to some $e_{1,n}(r)$ where $r\neq 0$. But by definition any element $[q] \in Z(\mbW_n(R))$ centralizes the whole subgroup $\mc{E}_{1,n}(R) \leq \mbW_n(R)$. Thus, writing $\phee([q]) = n_{[q]} [q_\phee]$ uniquely with $n_{[q]} \in \mbU_n(R)$ and $[q_\phee] \in \mbD_n(R)/Z_n(R)$ and picking a representative $[d_1(a_1) \cdots d_n(a_n)] = [q_\phee]$, it follows from relation~\eqref{rel:projectiveconjugation} that 
\begin{align*}
e_{1,n}(r) 	& = \phee([q]e_{1,n}(1)[q]^{-1}) \\
			& = n_{[q]}[q_\phee]e_{1,n}(r)[q_\phee]^{-1}n_{[q]}^{-1} \\
			& = e_{1,n}(a_1 a_{n}^{-1}r),
\end{align*}
which implies $a_1 = a_n$ (because $R$ is an integral domain and $r\neq0$) and thus $[q_\phee]\in Z(\mbW_n(R))$. Hence $Z(\mbW_n(R))$ is indeed $\barra{\phee}$-invariant as a subgroup of $\PB_n(R)/\mbU_n(R) \cong \mbD_n(R) / Z_n(R)$.

We can therefore consider the automorphism $\barra{\barra{\phee}}$ induced by $\barra{\phee} \in \Aut(\tfrac{\PB_n}{\mbU_n(R)})$ on the quotient $\frac{\PB_n(R)/\mbU_n(R)}{Z(\mbW_n(R))}$. Again by \cref{lem:desempre}, we have 
\begin{equation}\label{eq:2ndinequality}
R(\phee) \geq R(\barra{\phee}) \geq R(\barra{\barra{\phee}}).
\end{equation} 
We claim that the automorphism $\overline{\psi}$, when regarded as an automorphism of $\tfrac{\mbD_n(R)/Z_n(R)}{Z(\mbW_n(R))}$, coincides with~$\barra{\barra{\phee}}$. Combined with inequalities~\eqref{eq:1stinequality} and~\eqref{eq:2ndinequality} this yields 
\[R(\phee) \geq R(\barra{\barra{\phee}}) = R(\barra{\psi}) \geq R(\psi_{\mathrm{tf}}),\] 
which will conclude the proof of the theorem. To do so, we argue that 
\begin{equation}\label{mainiso}
\frac{\mbW_n(R)/Z(\mbW_n(R))}{Z(\mbU_n(R))} \cong \frac{\PB_n(R)/\mbU_n(R)}{Z(\mbW_n(R))},
\end{equation}
and that $\barra{\psi}=\barra{\barra{\phee}}$ under this above identification.

Recall that $\mc{D}_k(R^\times) := \langle d_k(u) \mid u \in R^{\times}\rangle$ and that $\mbW_n(R) = \mathcal{E}_{1,n}(R) \rtimes \frac{\mbD_n(R)}{Z_n(R)}$. By \cref{centerW}, 
\[Z(\mbW_n(R)) \cong \frac{Z_n(R) \prod_{i=2}^{n-1}\mathcal{D}_i(R^\times)}{Z_n(R)}.\]
Thus  
\[\frac{\mbW_n(R)}{Z(\mbW_n(R))}\cong \mathcal{E}_{1,n}(R)\rtimes \frac{\mbD_n(R)}{Z_n(R) \prod_{i=2}^{n-1}\mathcal{D}_i(R^\times)}\]
because $Z_n(R) \prod_{i=2}^{n-1}\mathcal{D}_i(R)$ acts trivially on $Z(\mbU_n(R)) = \mathcal{E}_{1,n}(R)$. Moreover,
\[\frac{\mbW_n(R)/Z(\mbW_n(R))}{\mathcal{E}_{1,n}(R)}\cong \frac{\mbD_n(R)}{Z_n(R)\prod_{i=2}^{n-1}\mathcal{D}_i(R^\times)} \cong \frac{\mbD_n(R)/Z_n(R)}{Z(\mbW_n(R))}.\] 
In fact, under the identification $\tfrac{\mbW_n(R)/Z(\mbW_n(R))}{\mathcal{E}_{1,n}(R)} \cong \tfrac{\mbD_n(R)/Z_n(R)}{Z(\mbW_n(R))}$, an element 
\[\mathcal{E}_{1,n}(R)(Z(\mbW_n(R))\cdot [d]) \in \frac{\mbW_n(R)/Z(\mbW_n(R))}{\mathcal{E}_{1,n}(R)}\] corresponds to the element 
\[Z(\mbW_n(R)) \cdot [d] \in \frac{\mbD_n(R)/Z_n(R)}{Z(\mbW_n(R))}.\] 
Therefore $\overline{\psi}$, viewed as an automorphism of the latter quotient group, is given by 
\[Z(\mbW_n(R))\cdot [d] \mapsto Z(\mbW_n(R))\cdot [d_{\phee}],\] 
which is exactly~$\barra{\barra{\phee}}$, proving our claim and hence the theorem. 
\end{proofof}

\section{Applications of Theorem~\ref{thm:Aff} to arithmetic groups}\label{Applications} 

We now provide a series of infinite families of finitely presented soluble $S$-arithmetic groups having property~$\Ri$. (Recall that a group $\Gamma$ is called an $S$-arithmetic subgroup of $\mbG(\K)$ if there exists a Dedekind domain of arithmetic type $\OS$ in a global field $\K$ and a linear algebraic group $\mbG$ defined over $\K$ such that $\Gamma$ is commensurable with $\rho^{-1}(\GL_n(\OS)) \leq \mbG(\K)$ for some faithful $\K$-representation $\rho$ of $\mbG$ into $\GL_n$ \cite{Margulis}.)

Throughout this section we work with the following Dedekind domains of arithmetic type: 
\begin{itemize}
\item $R=\Z\left[\tfrac{1}{{m}}\right] \subset \Q$, where ${m} \in \N_{\geq 2}$; 
\item $S=\F_q[t,t^{-1},f(t)^{-1}] \subset \F_q(t)$, where $q$ is a power of a prime $p$ and $f(t) \in \F_p[t] \setminus \set{t} \subset \F_q[t]$ is a nonconstant monic polynomial which is irreducible over $\F_q[t]$. In addition, if $p=2$, we require $f(t) \neq t-1$; 
\item $\ri = \ri_{\Q(\sqrt{d})}$, the ring of integers of a quadratic number field $\Q(\sqrt{d})$, where $d \in \N_{\geq 2}$ is a square-free positive integer for which a fundamental unit $\eta \in \ri_{\Q(\sqrt{d})}^\times$ has minimal polynomial of the form $x^2 +ax -1 \in \Z[x]$. 
\end{itemize} 
We prove the following.
 
\begin{thm}\label{pps:zft} Let $R=\Z\left[\tfrac{1}{m}\right]$, $S=\F_q[t,t^{-1},f(t)^{-1}]$, and $\ri = \ri_{\Q(\sqrt{d})}$ be as above. Then the following hold. 
\begin{enumerate}
	\item \label{pps:zft.i} 
	The automorphism induced by any $\psi \in \Aut(A(R))$, where $A \in \{\Aff,\Aff^+\}$, on the diagonal part $\left(\begin{smallmatrix} * & 0 \\ 0 & 1 \end{smallmatrix} \right)$ fixes infinitely many points. 
	\item \label{pps:zft.ii} 
	The automorphism induced by any $\psi \in \Aut(\Aff^+(S))$ on the diagonal part $\left(\begin{smallmatrix} * & 0 \\ 0 & 1 \end{smallmatrix} \right)$ fixes infinitely many points. 
	\item \label{pps:zft.iii} 
	The automorphism induced by any $\psi \in \Aut^+(\Aff(\ri))$ on the diagonal part $\left(\begin{smallmatrix} * & 0 \\ 0 & 1 \end{smallmatrix} \right)$ fixes infinitely many points. 
\end{enumerate}
In particular, for all $n \geq 2$, the groups $\mbB_n(R)$, $\PB_n(R)$, $\Aff(R)$, $B_{n}^{+}(R)$, $\PBplus_{n}(R)$, $\Aff^{+}(R)$, $B_{n}^{+}(S)$, $\PBplus_{n}(S)$, $\Aff^{+}(S)$, $B_{n}^{+}(\ri)$, $\PBplus_{n}(\ri)$, and $\Aff^{+}(\ri)$ have property~$\Ri$.
\end{thm}

The fact that all groups in the above list are finitely presented follows from known results about $S$-arithmetic groups; see \cite{Abels, Bux0, YuriSoluble}. 

We point out that \cref{pps:zft} recovers some results that were previously established in the literature using different methods, but also extending them to new cases. For instance, the fact that $\mbB_n(R)$, $\PB_n(R)$, $\Aff(R)$, $B_{n}^{+}(R)$, $\PBplus_{n}(R)$ and $\Aff^{+}(R)$ have~$\Ri$ ($R = \Z[\frac{1}{m}]$) also follows from a result of Gon\c{c}alves--Kochloukova \cite[Theorem~4.3]{DesiDaciberg} combined with the fact that such $S$-arithmetic groups are nonpolycyclic and, by a result of Tiemeyer~\cite{Tiemeyer}, of homotopical type $\mathtt{F}_\infty$. Methods due to Gon\c{c}alves--Wong~\cite{DacibergWongCounterexample} and Dekimpe--Tertooy--Van den Bussche~\cite{KarelSamIrisSolv} can also be used to check whether the (torsion-free metabelian) groups $\Aff^+(\ri_{\Q(\sqrt{d})})$ have~$\Ri$. Additionally, our alternative criterion to test for~$\Ri$ in $\mbB_n(\ri_{\Q(\sqrt{d})})$ and $\PB_n(\ri_{\Q(\sqrt{d})})$ given in the companion paper~\cite{Bn2} shows that these groups have property~$\Ri$ for $n\geq 4$.

At the same time, \cref{pps:zft} complements or improves on the above mentioned earlier results. For instance, Gon\c{c}alves--Kochloukova~\cite{DesiDaciberg} also showed that soluble groups $G$ of homological type $\mathtt{FP}_{2}$ for which $G/G''$ is not polycyclic `almost have~$\Ri$' in a precise sense \cite[Corollary~4.7]{DesiDaciberg}. While their corollary applies to $B_{n}^{+}(S)$, $\PBplus_{n}(S)$ and $\Aff^{+}(S)$ (with $S = \F_q[t,t^{-1},f(t)^{-1}]$), it does not guarantee that these groups have~$\Ri$. To the best of our knowledge, ours is the first proof of this fact. These examples are of interest also for their metric and self-similarity properties \cite{BartholdiNeuhauserWoessDL, DesiSaidSSFPn}. 

Furthermore, the tools of Gon\c{c}alves--Kochloukova~\cite{DesiDaciberg} do not apply to polycyclic groups. Thus the examples $B_{n}^{+}(\ri_{\Q(\sqrt{d})})$, $\PBplus_{n}(\ri_{\Q(\sqrt{d})})$, $\Aff^{+}(\ri_{\Q(\sqrt{d})})$ (with $n\geq 2$) from \cref{pps:zft} and $\mbB_n(\ri_{\Q(\sqrt{d})})$, $\PB_n(\ri_{\Q(\sqrt{d})})$ (with $n \geq 4$) from the companion article~\cite{Bn2} form the first infinite series of nonnilpotent polycyclic groups of increasing solubility class and with property~$\Ri$. Part~\eqref{pps:zft.iii} of \cref{pps:zft} generalizes our earlier \cref{pps:goldenRi} with the golden ratio. For example, $\ri_{\Q(\sqrt{2})}$ fits into that framework, so that $B_{n}^{+}(\Z[\sqrt{2}])$, $\PBplus_{n}(\Z[\sqrt{2}])$ and $\Aff^{+}(\Z[\sqrt{2}])$ all have~$\Ri$.

Our findings and the current state of knowledge discussed above prompt us to state the following. 

\begin{conj} \label{theconjecture}
Let $\mbB$ be a Borel subgroup of a split, connected, reductive, noncommutative linear algebraic group $\mbG$, all defined over a global field $\K$. If $\Gamma$ is an $S$-arithmetic subgroup of $\mbB(\K)$ that does not have property~$\Ri$, then $\Gamma$ is either virtually polycyclic or not finitely presented.
\end{conj}

The conclusion that the groups listed in \cref{pps:zft} have property~$\Ri$ is an immediate application of \cref{thm:Aff}. We thus restrict ourselves to showing that any automorphism of the given affine groups induces the identity on the diagonal. 
Parts~\eqref{pps:zft.i}, \eqref{pps:zft.ii} and~\eqref{pps:zft.iii} of \cref{pps:zft} are proved in Sections~\ref{zom}, \ref{fqtmoft}, and~\ref{ririri}, respectively. 

\subsection{Proof of Theorem~\ref{pps:zft}\eqref{pps:zft.i}}\label{zom} 
Let $p_1, \dots, p_{{k}}$ be the distinct prime integers in the decomposition of $m \in \N_{\geq 2}$ and  write $R=\Z\left[\tfrac{1}{p_1 \cdots p_k}\right]$. We restrict ourselves to the case of $\Aff(R)$ --- it will be clear from the arguments that the case of $\Aff^+(R)$ is analogous. 

By \cref{pps:phee} we may assume that $\psi(\mbU_2(R))\subseteq \mbU_2(R)$ and $\psi(\mathcal{D}_1(R)) \subseteq \mathcal{D}_1(R)$. 
Denote by $\psi_{\mathrm{tf}}$ the automorphism induced by %$\barra{\psi}$ 
$\psi$ on the torsion-free part of $\sbgpdk{1}(R^\times) \cong \Aff(R)/\mbU_2(R)$, that is, the automorphism induced on
\[\frac{\mathcal{D}_1(R^\times)}{t(\sbgpdk{1}(R^\times))} \cong \mathcal{D}_1(R^\times_{\mathrm{tf}}) = 
\gera{ \set{ d_1(p_{i}) \mid i =1, \ldots, k} }\cong \Z^k.\] 
Since $\mbU_2(R)$ is characteristic in $\Aff(R)$, there exists $r\in R\setminus \{0\}$ such that 
\[\psi(e_{1,2}(1))=e_{1,2}(r).\]
Moreover, as $\psi(\mathcal{D}_1(R)) \subseteq \mathcal{D}_1(R)$, there are for each $j \in \{1, \dots, k\}$ some $\ell_j, \lambda_{1j}, \dots, \lambda_{kj}\in \Z$ such that 
\[\psi(d_1(p_j))=d_1(-1)^{\ell_j} d_1(p_1)^{\lambda_{1j}}\cdots d_1(p_k)^{\lambda_{kj}}.\] 
Using the equality
\[e_{1,2}(p_j)=d_1(p_j)e_{1,2}(1)d_1(p_j)^{-1},\]
we see that 
\begin{align*}e_{1,2}(rp_j) &=\psi(e_{1,2}(p_j)) 
							 =\psi\left(\left(d_1(p_j)e_{1,2}(1)d_1(p_j)^{-1}\right)\right)\\
							&=e_{1,2}((-1)^{\ell_j} p_{1}^{\lambda_{1j}}\cdots p_{k}^{\lambda_{kj}}r).
\end{align*}
Consequently, 
\[\lambda_{ij}=	\begin{cases} 	1, &\text{ if }i=j,\\
																0, &\text{ otherwise}.
								\end{cases}\]
It follows that $\psi_{\mathrm{tf}}$ (as a an element of $\GL_k(\Z)$) is the identity, thus fixes infinitely many points. Hence $\mbB_n(R)$, $\PB_n(R)$ and $\Aff(R)$ have $\Ri$ by \cref{thm:Aff}. The previous computations also cover the case of $\Aff^+(R)$. \qed

\subsection{Proof of Theorem~\ref{pps:zft}\eqref{pps:zft.ii}}\label{fqtmoft}
Let $q$ be a power of a prime $p$ and let $f(t)$ be a nonconstant monic polynomial 
\begin{equation}\label{ftdef}f(t)= \sum_{k=0}^{m}\lambda_kt^k \in \F_p[t]\setminus\{t\} \subset \F_q[t] \end{equation}
that is irreducible over $\F_q[t]$. In particular $\lambda_0 \neq 0$ and $\lambda_m = 1$. If $p=2$, we require additionally that $f(t)\neq t-1$. Throughout this section, we write $S=\F_q[t,t^{-1},f(t)^{-1}]$. 

Here, we take $S^\times_{\mathrm{tf}}=\langle t^k, f(t)^\ell \mid k,\ell \in \Z\rangle \cong \Z^2$ as a complement for the torsion subgroup of units $t(S^\times) = \F_q \setminus \{0\}$, so that 
the group $\Aff^+(S)$ is given by  
\[
\Aff^+(S) = \left\{\left( \begin {array}{cc} t^kf(t)^\ell &s\\ \noalign{\medskip}0&1\end {array} \right) \, \middle| \, k, \ell \in \Z,~ s \in S \right\}.
\] 
Let $\psi \in \Aut(\Aff^{+}(S))$. We argue that the induced automorphism $\psi_{\mathrm{tf}}:=\overline{\psi}$ on $\mc{D}_1(S^\times_{\mathrm{tf}}) \cong \Aff^{+}(S)/\mbU_2(S) \cong \Z^2$ has eigenvalue $1$ (interpreted as an element of $\GL_2(\Z)$) 
--- equivalently by \cref{lem:ReidemeisterAbelian}, $\psi_{\mathrm{tf}}$ has infinitely many fixed points.

Again by \cref{pps:phee} we may assume that $\psi(\mathcal{D}_1(S^\times_{\mathrm{tf}})) \subseteq \mathcal{D}_1(S^\times_{\mathrm{tf}})$. 
Thus, there are $a,b,c,d \in \Z$ such that 
\begin{align*} 	\psi(d_1(t))	&=d_1(t^af(t)^b),\\
							\psi(d_1(f(t)))	&=d_1(t^cf(t)^d).
\end{align*}
In particular, for $k\in \Z$, we have
\[\psi(d_1(t^k))=\psi(d_1(t))^k=d_1(t^{ak}f(t)^{bk}).\]

Observe that, when regarded as an automorphism of $\Z^2\cong \mathcal{D}_1(S^\times_{\mathrm{tf}})$, the map $\psi_{\mathrm{tf}}$ is represented by the matrix 
\[M_{\psi_{\mathrm{tf}}}=\begin{pmatrix} a & c \\ b & d \end{pmatrix} \leq \GL_2(\Z). \] 
In particular, its determinant $\det(M_{\psi_{\mathrm{tf}}})=ad-bc$ must be $\pm 1$.
To show that $R(\psi_{\mathrm{tf}})=\infty$, it suffices to prove that 
$\det(\mb{1}_2-M_{\psi_{\mathrm{tf}}})=0$.

The subgroup $\mbU_2(S)$ is characteristic in $\Aff^{+}(S)$, so that there exists an automorphism $\Phi: \Addi(S) \to \Addi(S)$ satisfying 
\[\psi(e_{1,2}(s))=e_{1,2}(\Phi(s)), \text{ for each }s\in S.\] 
For each $u\in S^\times_{\mathrm{tf}}$ and each $s \in S$, the following holds in $\Aff^{+}(S)$ by relations~\eqref{rel:commutators}:
\[e_{1,2}(us)=d_1(u)e_{1,2}(s)d_1(u)^{-1}.\]
Thus, for each $k \in \N$, 
\[e_{1,2}(\Phi(t^k))= \psi(e_{1,2}(t^k))=\psi(d_1(t^k)e_{1,2}(1)d_1(t^k)^{-1})=e_{1,2}(t^{ak}f(t)^{bk}\Phi(1)),\]
which implies $\Phi(t^k)=t^{ak}f(t)^{bk} \Phi(1)$. 
On the one hand, $\Phi$ is an automorphism of $\Addi(S)$, so that 
\[\Phi(f(t))=\sum_{k=0}^{m}\lambda_k\Phi(t^k)=\sum_{k=0}^{m}\lambda_kt^{ak}f(t)^{bk}\Phi(1)\]
because $\lambda_k \in \F_p$ for all $k\in \{0, 1,\dots, m\}$. 
On the other hand, 
\begin{align*}e_{1,2}(\Phi(f(t))	&=\psi(e_{1,2}(f(t))=\psi(d_1(f(t))e_{1,2}(1)d_1(f(t))^{-1})\\
																	&=e_{1,2}(t^cf(t)^d\Phi(1)).
\end{align*}
Whence 
\begin{equation}
\label{eq:cdab} t^cf(t)^d=\sum_{k=0}^{m}\lambda_kt^{ak}f(t)^{bk}.
\end{equation}

Let us determine which values of $a,b,c,d \in \Z$ are solutions of \cref{eq:cdab} and, for each one of them, we show that the corresponding matrix $M_{\psi_{\mathrm{tf}}}$ has eigenvalue~$1$. 

We use $t$-adic and $f(t)$-adic valuations and a case-by-case analysis; refer, e.g., to \cite{Rosen} for background on function fields. Let us recall the definitions of these valuations. Consider the field $\F_q(t)$ of rational functions on the variable~$t$. Since $\F_q[t]$ is a principal ideal domain and $f(t)$ is irreducible in $\F_q[t]$, 
each $x \in \F_q(t)$ can be written 
uniquely {(up to reordering of factors)}  as 
\[x=f(t)^{\nu_x} \pi_{1}^{a_1}\dots \pi_{j}^{a_j},\]
where $\nu_x, a_1, \dots, a_j$ are integers and the $\pi_i$ are irreducible elements of $\F_q[t]$. 
The $f(t)$-adic valuation $v_{f(t)}: \F_q(t) \to \Z \cup \{\infty\}$ is defined by the rule $v_{f(t)}(x)= \nu_x$ and $v_{f(t)}(0)=\infty$. In particular, this restricts to the (nonnegative) $f(t)$-adic valuation on $\F_q[t]$, also written $v_{f(t)}: \F_q[t] \to \Z_{\geq 0} \cup \{\infty\}$ by abuse of notation. 
Analogously, we define a $t$-adic valuation $v_t: \F_q(t) \to \Z\cup \{\infty\}$. 

\textbf{\underline{Case 1:}} $\mathbf{a>0.}$
We split Case~1 according to whether $b$ is positive, negative or zero.

\textbf{\underline{Case 1.1:}} $\mathbf{a>0}$ \textbf{and} $\mathbf{b<0.}$
Write $b=-\beta$ with $\beta$ a positive integer. Since we want to compare $f(t)$ and $t$-adic valuations, we need all powers in \cref{eq:cdab} to be nonnegative. We then rewrite \cref{eq:cdab} as
\[ t^cf(t)^{d+\beta m}=\sum_{k=0}^{m}\lambda_kt^{ak}f(t)^{\beta(m-k)}.\]
Notice that the RHS equals
\[\lambda_0f(t)^{\beta m}+ t^a\sum_{k=1}^{m}\lambda_kt^{a(k-1)}f(t)^{\beta(m-k)},\]
which has $t$-adic valuation zero (i.e. it is not divisible by $t$), since $\lambda_0\neq 0$ and $\beta m>0$. Consequently, $c=0$. 
This gives \[\pm 1=ad-bc =ad ~ \text{ so that } ~ a=1, d=\pm 1.\]
For these values of $a$ and $c$, it is clear that $\det(\mb{1}_2-M_{\psi_{\mathrm{tf}}})=0$. 
\smallskip 

\textbf{\underline{Case 1.2:}} $\mathbf{a>0}$ \textbf{and} $\mathbf{b>0.}$
In this case, we can rewrite \cref{eq:cdab} as follows: 
\[ t^cf(t)^d= \lambda_0+t^a \sum_{k=1}^{m}\lambda_kt^{a(k-1)}f(t)^{bk}.\]
Since $\lambda_0 \neq 0$ and $a>0$, we see that the $t$-adic valuation of the RHS is zero (i.e., the RHS is not divisible by $t$). Thus, $c=0$.

Thus, we can write \cref{eq:cdab} as
\[ f(t)^d= \lambda_0+f(t)^b \sum_{k=1}^{m}\lambda_kt^{ak}f(t)^{b(k-1)}.\]
As above, we conclude that $d=0$. This is a contradiction to $ad-bc= \pm 1$.
Thus, we cannot have $a$ and $b$ both positive. 
\smallskip 

\textbf{\underline{Case 1.3:}} $\mathbf{a>0}$ \textbf{and} $\mathbf{b=0.}$ 
In this case, we must have $ad=\pm 1$ and hence $a=1$. 
For these values of $a$ and $b$, it is clear that $\det(\mb{1}_2-M_{\psi_{\mathrm{tf}}})=0$.  
\medskip

\textbf{\underline{Case 2:}} $\mathbf{a<0.}$
Here, $a=-\alpha$ with $\alpha$ a positive integer. 
As before, we split this case according to whether $b$ is positive, negative or zero.

\textbf{\underline{Case 2.1:}} $\mathbf{a<0}$ \textbf{and} $\mathbf{b>0.}$
In this case, \cref{eq:cdab} becomes: 
\[t^{c+\alpha m}f(t)^d=\sum_{k=0}^{m}\lambda_kt^{\alpha(m-k)}f(t)^{bk}= \lambda_0t^{\alpha m}+f(t)^b\sum_{k=1}^{m}\lambda_kt^{\alpha(m-k)}f(t)^{b(k-1)}.\]
The $f(t)$-adic valuation of the RHS is zero (i.e., the RHS is not divisible by $f(t)$), thus $d=0$. We then conclude 
\[t^{c+\alpha m}=f(t)^{bm}+t^{\alpha}\sum_{k=0}^{m-1}\lambda_kt^{\alpha(m-k-1)}f(t)^{bk}.\]
Again, since the RHS has $t$-adic valuation zero, we must have $c=-\alpha m=am$.
In particular, $\pm 1=ad-bc = bc=bam$, hence $a=-1$, $b=1$, $c=-1$, $m=1$. 
Here we do obtain $\det(\mb{1}_2-M_{\psi_{\mathrm{tf}}})\neq 0$. However, substituting the values above for $a$, $b$, $c$, $d$, $m$ in expressions~\eqref{ftdef} and~\eqref{eq:cdab}, it follows that 
\[ f(t) = \lambda_0 + t \quad \text{ and } \quad f(t) = 1-\lambda_0 t. \]
Since $f(t)$ is monic, it holds $\lambda_0 = -1$ and thus 
$f(t)=t-1=t+1$, that is, $f(t)=t-1$ in characteristic two, the excluded case. Thus, we cannot have negative~$a$ and positive~$b$ simultaneously. 
\smallskip 

\textbf{\underline{Case 2.2:}} $\mathbf{a<0}$ \textbf{and} $\mathbf{b<0.}$
Suppose $b=-\beta <0$ with $\beta$ a positive integer. 
We can rewrite \cref{eq:cdab} as
\[t^{c+\alpha m}f(t)^{d+\beta m}=1+t^{\alpha}f(t)^\beta\sum_{k=0}^{m-1}\lambda_kt^{\alpha(m-k-1)}f(t)^{\beta(m-k-1)},\] so that the RHS is divisible by neither $t$ nor $f(t)$. We then must have $c=-\alpha m=am$ and $d=-\beta m=bm$. This gives 
$ad-bc=0$, a contradiction. 
Thus, we cannot have $a$ and $b$ both negative. 
\smallskip

\textbf{\underline{Case 2.3:}} $\mathbf{a<0}$ \textbf{and} $\mathbf{b=0.}$
Here, we have $ad=\pm 1$, so that $a=-1$ and $d=\pm 1$. In this case, \cref{eq:cdab} reduces to 
    \[t^{c+m}f(t)^d=\sum_{k=0}^{m}\lambda_kt^{m-k}.\]
Since the RHS is not divisible by $t$, we must have $c=-m$. Moreover, if $d=-1$, we would have 
\[1=f(t)\sum_{k=0}^{m}\lambda_kt^{m-k}\] 
as an equality in $\F_q[t]$. Since this cannot hold, one has $d=1$.  
For these values of $b$ and $d$, it is clear that  $\det(\mb{1}_2-M_{\psi_{\mathrm{tf}}})=0$.
\medskip

\textbf{\underline{Case 3:}} $\mathbf{a=0.}$ 
If $a=0$, we get $bc=\pm 1$. Let us split this case according to whether $b=-1$ or $b=1$.

\textbf{\underline{Case 3.1:}} $\mathbf{a=0}$ \textbf{and} $\mathbf{b=-1.}$
In this case, \cref{eq:cdab} reduces to
\[ t^cf(t)^{d+m}=1+\sum_{k=0}^{m-1}\lambda_kf(t)^{m-k}.\]
Again, the $f(t)$-adic valuation on the RHS is zero, so that $d=-m$. 
If $c=-1$, we would have 
\[ 1=t\sum_{k=0}^{m}\lambda_kf(t)^{m-k},\]
so it must follow $c=1$, and \cref{eq:cdab} becomes 
\[ t=1+\sum_{k=0}^{m-1}\lambda_kf(t)^{m-k}. \]
In particular, 
\begin{equation}\label{eq:cneg} t-1=f(t)\sum_{k=0}^{m-1}\lambda_kf(t)^{m-k-1},\end{equation} 
and hence $f(t)$ divides $t-1$. Since both $f(t)$ and $t-1$ are irreducible and monic, we must have $f(t)=t-1$. In particular, $m=1$ and $\lambda_0=-1$. Then, \cref{eq:cneg} reduces to
\[ t-1=\lambda_0f(t)=-t+1,\] 
so that $1=-1$, that is, $\carac(\F_q)=2$ and we get the excluded case $f(t)=t-1$. 
Thus, it is not possible to have $a=0$ and negative $b$ under our assumptions. 
\smallskip 

\textbf{\underline{Case 3.2:}} $\mathbf{a=0}$ \textbf{and} $\mathbf{b=1.}$
In this case, \cref{eq:cdab} becomes: 
\[t^cf(t)^d=\sum_{k=0}^{m}\lambda_kf(t)^{k}.\]
Since the RHS has $f(t)$-adic valuation zero, we must have $d=0$. 
If $c=-1$, then we would have 
\[1=t\sum_{k=0}^{m}\lambda_kt^{ak}f(t)^{bk},\] so that $c=1$.
It is clear that for $a=d=0$ and $b=c=1$ one has $\det(\mb{1}_2-M_{\psi_{\mathrm{tf}}})=0$. 
\qed

\subsection{Proof of Theorem~\ref{pps:zft}\eqref{pps:zft.iii}} \label{ririri}

Since $d \in \N_{\geq 2}$ is a square-free positive integer, the quadratic number field $\Q(\sqrt{d})$ is such that its ring of integers $\ri = \ri_{\Q(\sqrt{d})}$ has group of units $\ri^\times = \{\pm 1\} \cdot \gera{\eta} \cong C_2 \times \Z$, 
where $\eta$ is the fundamental unit. Now let $\psi \in \Aut(\Aff(\ri))$. As $\mbU_2(\ri)$ is characteristic in $\Aff(\ri)$, there is some $r \in \ri \setminus \{0\}$ such that 
\[\psi(\ekl{1,2}(1))=\ekl{1,2}(r).\]
By \cref{pps:phee}, we may assume that there exists $\ell \in \Z$ such that  
\[\psi(d_1(\eta)) = d_1(\eta^\ell).\]  
Note that, since $\psi$ is an automorphism and 
\[\frac{\mathcal{D}_1(\ri^\times)}{t(\sbgpdk{1}(\ri^\times))} \cong \mathcal{D}_1(\ri^\times_{\mathrm{tf}}) = 
\gera{ \set{ d_1(\eta)} } \cong \Z,\] 
we can only have $\ell=1$ or $\ell=-1$. We argue that $\ell = 1$. 

Recall that $x^2+ax-1 \in \Z[x]$ is the minimal polynomial of $\eta$. By relations~\eqref{rel:commutators}, one has 
\[ \ekl{1,2}(\eta^2) = d_1(\eta^2) \ekl{1,2}(1) d_1(\eta^{-2}), \]
whereas by~\eqref{rel:commutators} and the identity $\eta^2 + a\eta -1 = 0$ it holds 
\[\ekl{1,2}(\eta^2) = d_1(\eta)\ekl{1,2}(1)^{-a}d_1(\eta^{-1})\ekl{1,2}(1).\]
Applying $\psi$ to both equalities yields 
\[\ekl{1,2}(\eta^{2\ell}r) = \ekl{1,2}(-\eta^{\ell}ar+r).\] 
Hence $\eta^{2\ell}+a\eta^{\ell}-1 = 0$, so that $\eta^\ell$ is also a root of the (monic, irreducible) polynomial $x^2+ax-1 \in \Z[x]$. If $\ell$ were equal to $-1$, the identities $\eta^2+a\eta-1=0=\eta^{-2}+a\eta^{-1}-1$ would imply 
\[\eta^4 + a\eta^3 = 1 + a\eta,\]
so that $\eta$ would be a root of $x^4+ax^3-ax-1 \in \Z[x]$. This polynomial is divisible by $x^2-1$, whence $\eta$ would also be a root of $x^2+ax+1$. A contradiction since $x^2+ax-1$ is the minimal polynomial for $\eta$. Thus $\ell=1$ and we are done. \qed

\section*{Acknowledgments}
Part of this project was carried when PMLA was at KU Leuven Kulak, Kortrijk, supported by the long term structural funding \emph{Methusalem grant} of the Flemish Government. YSR was partially supported by the German Research Foundation (DFG) through the Priority Program 2026 \emph{Geometry at infinity}, Project~62, and the GRK~2297 \emph{MathCoRe}, 314838170. 

We are indebted to Karel Dekimpe, Daciberg Gon\c{c}alves, and Peter Wong for invaluable comments and suggestions that inspired or greatly improved this work, and particularly Dekimpe for helpful remarks on earlier versions of the present article. This project, including the companion paper~\cite{Bn2}, heavily profited from discussions at the 2018 ICM Satellite Conference ``Geometric Group Theory'' in Campinas, Brazil, the 2020 Workshop ``Geometric Structures in Group Theory'' in Oberwolfach, Germany, and the 2023 Conference ``Nielsen Theory and Related Topics'' in Oostende, Belgium. We gratefully thank the organizers of these events and the participants for their contributions.

We also thank the anonymous referee for helpful comments and suggestions.

 \def\cprime{$'$} \def\cprime{$'$}
 \providecommand{\bysame}{\leavevmode\hbox to3em{\hrulefill}\thinspace}
 \providecommand{\MR}{\relax\ifhmode\unskip\space\fi MR }

\printbibliography

\end{document}